\documentclass[a4paper,11pt]{amsart}
\usepackage{amscd,amsfonts,amssymb}
\usepackage{tikz}
\usepackage{pgfplots}
\usetikzlibrary{automata,positioning,calc,trees}
\usetikzlibrary{intersections,pgfplots.fillbetween}
\usepackage{graphicx,tikz}
\usepackage{pgf,tikz,pgfplots}
\usepackage{mathrsfs}
\usetikzlibrary{arrows}
\usepackage{enumerate}
\usepackage[shortlabels]{enumitem}
\usepackage{mathrsfs}
\usepackage{amssymb,amsmath,amsthm,color}
\usepackage{caption,subcaption}
\usepackage{hyperref}
\usepackage{url}
\usepackage{setspace}
\usepackage{float}
\textwidth6.5in
\textheight9.5in
\hoffset-0.6in	
\voffset-0.6in


\vfuzz2pt 
\hfuzz2pt 


\newtheorem{thm}{Theorem}[section]
\newtheorem{coro}[thm]{Corollary}
\newtheorem{lemma}[thm]{Lemma}
\newtheorem{prop}[thm]{Proposition}
\theoremstyle{definition}

\theoremstyle{remark}

\numberwithin{equation}{section}
\def\be#1 {\begin{equation} \label{#1}}
	\newcommand{\ee}{\end{equation}}

\def\sqw{\hbox{\rlap{\leavevmode\raise.3ex\hbox{$\sqcap$}}$%
		\sqcup$}}
\def\findem{\ifmmode\sqw\else{\ifhmode\unskip\fi\nobreak\hfil
		\penalty50\hskip1em\null\nobreak\hfil\sqw
		\parfillskip=0pt\finalhyphendemerits=0\endgraf}\fi}

\newcommand{\R}{{\mathbb {R}}}

\newcommand{\N}{{\mathbb N}}
\newcommand{\Z}{{\mathbb Z}}

\newcommand{\supp}{\operatorname{supp}}
\setcounter{tocdepth}{1}


\begin{document}
	\baselineskip16pt
	
	\title[]{Sparse bounds for maximal oscillatory rough singular integral operators}
	\author[ ]{ Surjeet Singh Choudhary, Saurabh Shrivastava and Kalachand Shuin}
	
	\address{(Surjeet Singh Choudhary and Saurabh Shrivastava) Department of Mathematics, Indian Institute of Science Education and Research Bhopal, Bhopal-462066, India}
	\email{ surjeet19@iiserb.ac.in, saurabhk@iiserb.ac.in}
	
	\address{(Kalachand Shuin)Department of Mathematical Sciences, Seoul National University, Seoul 08826, Republic of Korea}
	\email{ kcshuin21@snu.ac.kr}
	
	\subjclass[2010]{Primary 42B20; 42B25}
	\date{\today}
	\keywords{Rough singular integrals, sparse domination, weighted estimates }

	\begin{abstract}
		We prove sparse bounds for maximal oscillatory rough singular integral operator 
		$$T^{P}_{\Omega,*}f(x):=\sup_{\epsilon>0} \left|\int_{|x-y|>\epsilon}e^{\iota P(x,y)}\frac{\Omega\big((x-y)/|x-y|\big)}{|x-y|^{n}}f(y)dy\right|,$$
		where $P(x,y)$ is a real-valued polynomial on $\mathbb{R}^{n}\times \mathbb{R}^{n}$ and $\Omega\in L^{\infty}(\mathbb{S}^{n-1})$ is a homogeneous function of degree zero with $\int_{\mathbb{S}^{n-1}}\Omega(\theta)~d\theta=0$. This allows us to conclude weighted $L^p-$estimates for the operator $T^{P}_{\Omega,*}$. Moreover, the norm $\|T^P_{\Omega,*}\|_{L^p\rightarrow L^p}$ depends only on the total degree of the polynomial $P(x,y)$, but not on the coefficients of $P(x,y)$. Finally, we will show that these techniques also apply to obtain sparse bounds for oscillatory rough singular integral operator $T^{P}_{\Omega}$ for $\Omega\in L^{q}(\mathbb{S}^{n-1})$, $1<q\leq\infty$.
	\end{abstract}
	
	\maketitle
	
	\section{Introduction } \label{section1}
The study of oscillatory singular integrals has its origin in PDEs, the study of singular Radon transforms and Hilbert transform along curves. Let us briefly describe the case of Radon transform and Hilbert transform along curves to motivate the study of oscillatory singular integrals.  For, consider a point $P=(x,t)\in \R^{n+1}$ with $x\in \R^n$ and $t\in \R$ and let $L_{(x,t)}:y\rightarrow(x+y,t+\langle Bx,y\rangle)$, $y\in\mathbb{R}^{n}$ be a linear map, where $B=(b_{jk})$ is a real-valued bilinear form. Note that the image of $L_{(x,t)}$ defines a hypersurface in $\mathbb{R}^{n+1}$ which passes through $P$. The Radon transform associated to a suitable kernel $K$ on $\mathbb{R}^{n}$ is defined by 
	$$\mathcal{R}u(x,t)=\int_{\mathbb{R}^{n}}u(L_{(x,t)}(y))K(y)~dy.=\int_{\mathbb{R}^{n}}u(x+y,t+\langle Bx,y\rangle)K(y)~dy.$$
	Using the Fourier transform, we can express the operator $\mathcal{R}$ as 
	\begin{eqnarray*}
		\mathcal{R}u(x,t)&=&\frac{1}{2\pi}\int_{\mathbb{R}} e^{\iota \lambda t} \int_{\mathbb{R}^{n}}e^{\iota \lambda \langle Bx,y\rangle}K(y)\hat{u}(x+y,\lambda)~dy d\lambda\\
		&=&\int_{\R} e^{\iota \lambda t}e^{-\iota \lambda \langle Bx,x\rangle}(T_{\lambda}\hat{u}(\cdot,\lambda))(x)~d\lambda,
	\end{eqnarray*}
	where $$T_{\lambda}f(x)=\int_{\R^n} e^{\iota \lambda \langle Bx,y\rangle}K(y-x)f(y)~dy, $$
	for $f\in C^{\infty}_{0}(\mathbb{R}^{n})$. Invoking the Plancherel theorem, $L^{2}$-estimate for the Radon transform $\mathcal{R}$ can be deduced by proving $L^{2}$-estimates for the operator $T_{\lambda}$. Observe that the operator $T_{\lambda}$ has an oscillatory factor with the kernel. We refer to~Phong and Stein~\cite{PhongStein} for more details. 
	
	We recall another interesting aspect of studying oscillatory singular integrals through Hilbert transform along curves. For example, recall that the Hilbert transform along a suitable plane curve $\Gamma=\{(t,\gamma(t)):t\in \R\}$ is defined by 
	$$H_{\gamma}f(x)=p.v.\int_{\R} f(x_1-t,x_2-\gamma(t))\frac{dt}{t}.$$
	In particular, if we consider $\gamma(t)=t^{\alpha}, 0<\alpha\neq1,$ and take $f(x_1,x_2)=g(x_1)e^{\iota x_2}\chi_{\{|x_2|\leq R\}}$, then the case of $R\rightarrow\infty$ corresponds to the study of oscillatory singular integral associated with the kernel $\frac{e^{\iota |x|^{\alpha}}}{x}$. We refer to ~Chanillo, Kurtz and Sampson~\cite{CKS1,CKS2} for more details on $H_{\gamma}$ when $\gamma(t)=t^{\alpha}, 0<\alpha\neq1$.
	
	Next, we briefly recall some of the important developments concerning $L^p-$estimates for the oscillatory singular integral operators.  Ricci and Stein~\cite{Ricci} obtained the first results for oscillatory singular integral operators  
	\begin{eqnarray}
		T^P_{K}f(x):=\int_{\mathbb{R}^{n}}e^{\iota P(x,y)}K(x,y)f(y)~dy,~x\notin supp(f) ,  
	\end{eqnarray}
	where $K\in C^1(\R^n\setminus\{0\})$ is a Calder\'{o}n-Zygmund kernel.  They proved that $T^P_{K}$ extends to a bounded operator on $L^p(\R^n)$ for all $1<p<\infty$. Moreover, the norm $\|T^P_{K}\|_{L^p\rightarrow L^p}$ depends only on the total degree of the polynomial $P(x,y)$, but not on the coefficients of $P(x,y)$. Later, Chanillo and Christ ~ \cite{CC} proved that the operator $T^P_{K}$ is of weak-type $(1,1)$.  Al-Qassem, Cheng and Pan~\cite{Qassem1, Qassem2} extended the $L^p$  estimates to oscillatory integrals with kernels satisfying Hölder type regularity condition. In~\cite{Qassem1} they proved the results with the oscillatory factor $e^{\iota B(x,y)}$, where $B(x,y)$ is a real-valued bilinear form. Later, in \cite{Qassem2} they managed to prove bounds with the oscillatory factor $e^{\iota P(x,y)}$, where $P(x,y)$ is a real-valued polynomial in $\mathbb{R}^{n}\times \mathbb{R}^{n}$. Later, Lacey and Spencer \cite{Lacey} established sparse bounds for the operator $T^{P}_{K}$ for Calder\'{o}n-Zygmund kernels $K\in C^1(\R^n\setminus\{0\})$. The sparse bounds for the maximally truncated oscillatory singular integrals $T^{P}_{K,*}$ for Calder\'{o}n-Zygmund kernels $K\in C^1(\R^n\setminus\{0\})$ were obtained in~\cite{KL}. We would like to remark here that the sparse bounds in~\cite{KL} imply the weak-type $(1,1)$ boundedness of $T^{P}_{K,*}$. 
	
	In this paper, we are concerned with oscillatory singular integral operators with rough kernels. Let $\Omega$  be a homogeneous function of degree zero on the unit sphere $\mathbb{S}^{n-1}, n\geq 2,$ with mean-value zero, i.e.,  $\int_{\mathbb{S}^{n-1}}\Omega(\theta)~d\theta=0$. Denote  $K(y)=\frac{\Omega(y')}{|y|^{n}}, y'=y/|y|\in \mathbb{S}^{n-1}$. Let $P(x,y)$ be a real-valued polynomial on $\mathbb{R}^{n}\times \mathbb{R}^{n}$. The oscillatory singular integral operator associated with the kernel $K$ and polynomial $P$ is defined by  
	\begin{eqnarray}
		T^P_{\Omega}f(x):=p.v\int_{\mathbb{R}^{n}}e^{\iota P(x,y)}K(x-y)f(y)~dy,
	\end{eqnarray}
	where $f$ is a compactly supported smooth function defined on $\R^n$. The maximally truncated oscillatory singular integral operator is defined by 
	$$T^{P}_{\Omega,*}f(x):=\sup_{\epsilon>0}\Big| \int_{|x-y|>\epsilon}e^{\iota P(x,y)}K(x-y)f(y)dy\Big|.$$
	As mentioned we are interested in operators $T^P_{\Omega}$ with rough kernels, i.e., we assume that $\Omega$ belongs to some $ L^q(\mathbb{S}^{n-1}), 1\leq q\leq \infty$ and does not satisfy any regularity condition. The kernel $K$ associated with such an $\Omega$ is called a rough kernel and the corresponding operator is called a rough singular integral operator.  
	
	Observe that if the polynomial $P(x,y)$ is of the form $P_1(x)+P_2(y)$, where $P_1$ and $P_2$ are polynomials in $x$ and $y$ respectively, the $L^p$ estimates for $T^P_{\Omega}f$ and $T^{P}_{\Omega,*}f$ follow immediately from those of the corresponding rough singular integral operators
	\begin{eqnarray}
		T_{\Omega}f(x):=p.v\int_{\mathbb{R}^{n}}K(x-y)f(y)~dy,
	\end{eqnarray} and 
	$$T_{\Omega,*}f(x):=\sup_{\epsilon>0}\Big| \int_{|x-y|>\epsilon}K(x-y)f(y)dy\Big|.$$
	In view of this remark, henceforth we shall assume that the polynomial $P(x,y)$ does not take the form $P_1(x)+P_2(y)$. There is a vast literature available in the context of rough singular integral operators $T_{\Omega}$ (which is $T^P_{\Omega}$ without oscillatory factor). We refer the reader to Calder\'{o}n and Zygmund~ \cite{Calderon}, Christ and Rubio de Francia~\cite{Christ}, Hofmann in \cite{Hofmann}, Seeger~\cite{Seeger}, Conde-Alonso et al. ~\cite{Conde} and  Hyt\'{o}nen et al. in \cite{Hytonen} for a systematic account, including recent developments, of rough singular integral operators. 
	
	We would like to highlight here that unlike the case of $T^P_{K}$ with $C^1(\R^n\setminus\{0\})$ continuous kernels, the $L^p$ estimates for $T^P_{\Omega}$ are not well understood for rough kernels. Shanzhen and Yan~ \cite{LZ} obtained the following $L^p-$estimates in this direction. 
	\begin{thm}\cite{LZ}\label{LZ}
		Let $\Omega\in L^q(\mathbb{S}^{n-1}), 1<q\leq\infty$. Then $T^{P}_{\Omega}$ is bounded on $L^{p}(\R^n)$ for all $ 1<p<\infty$, with the operator norm of $T^{P}_{\Omega}$ independent of the coefficients of $P(x, y)$.
	\end{thm}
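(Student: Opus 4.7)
The plan is to proceed by induction on the total degree $d=\deg P(x,y)$, following the scheme introduced by Ricci and Stein~\cite{Ricci} for smooth kernels and adapted to the rough setting by invoking the standard $L^p$ theory for $T_{\Omega}$ with $\Omega\in L^q(\mathbb{S}^{n-1})$ (see, e.g., \cite{Calderon, Christ, Seeger, Hytonen}). Write $P(x,y)=\sum_{|\alpha|+|\beta|\le d} a_{\alpha\beta}\,x^{\alpha}y^{\beta}$; by the reduction discussed in the introduction, I may assume $P$ contains a mixed monomial with $|\alpha|\ge 1$ and $|\beta|\ge 1$. For the base case $d=1$, $P$ is necessarily of separated form $P_1(x)+P_2(y)$, and $T^P_\Omega f(x)=e^{\iota P_1(x)}\,T_\Omega\!\bigl(e^{\iota P_2(\cdot)}f\bigr)(x)$, so the conclusion is inherited from the $L^p$ boundedness of the classical rough singular integral $T_\Omega$ for $\Omega\in L^q(\mathbb{S}^{n-1})$, $q>1$.

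For the inductive step, fix $d\ge 2$ and assume the theorem holds for every polynomial of degree $<d$, with a constant depending only on $(d,n,p,q)$. Select a highest-order mixed monomial $a_0\,x^{\alpha_0}y^{\beta_0}$ of $P$; by the dilation $f(\cdot)\mapsto f(\lambda\,\cdot)$, which preserves $\|T^P_\Omega\|_{L^p\to L^p}$ while rescaling $a_0$, we may normalize $|a_0|=1$. Decompose the kernel dyadically as $K(z)=\sum_{j\in\Z}K_j(z)$ with $\supp K_j\subset\{2^j\le|z|<2^{j+1}\}$, and set $T^P_j f(x)=\int e^{\iota P(x,y)}K_j(x-y)f(y)\,dy$. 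For small scales $j\le 0$, expand the phase around $y=x$ as $e^{\iota P(x,y)}=e^{\iota P(x,x)}+\bigl(e^{\iota P(x,y)}-e^{\iota P(x,x)}\bigr)$; the first term produces a modulated copy of the rough singular integral $T_\Omega$ acting on the annular truncation of $K$, while the error is bounded using $|e^{\iota Q}-1|\lesssim |Q|$ and the vanishing of $Q(x,y)=P(x,y)-P(x,x)$ at $y=x$ by an oscillatory integral with a Calder\'on--Zygmund type kernel of lower homogeneity and a phase of degree $<d$, both falling under the inductive hypothesis.

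For large scales $j>0$ the heart of the argument is the $L^2$ decay
\[
\|T^P_j\|_{L^2\to L^2}\le C\,2^{-\epsilon j},\qquad \epsilon=\epsilon(d,n)>0,
\]
with $C$ independent of the coefficients of $P$. This is obtained by a $TT^*$ computation: the kernel of $T^P_j(T^P_j)^*$ carries the phase $P(x,y)-P(x',y)$, whose leading part is the mixed contribution $a_0\bigl(x^{\alpha_0}-(x')^{\alpha_0}\bigr)y^{\beta_0}$. Since $|\beta_0|\ge 1$, this produces non-degenerate oscillation in $y$, and a van der Corput estimate on the resulting integral yields the factor $2^{-\epsilon j}$. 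Interpolating with the trivial bound $\|T^P_j\|_{L^p\to L^p}\lesssim\|\Omega\|_{L^q(\mathbb{S}^{n-1})}$ obtained from Young's inequality and summing the resulting geometric series over $j>0$ closes the $L^p$ estimate for all $1<p<\infty$.

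The main obstacle is the rough character of $\Omega$ in the $L^2$ decay step. Unlike in the Ricci--Stein setting, where smoothness of $K$ permits clean integration by parts on the $TT^*$ kernel, here one must pass to spherical coordinates in $K_j$, apply van der Corput in the radial variable, and dominate the angular integration against $\Omega$ via H\"older's inequality in the pair $(q,q')$---precisely the step that requires the hypothesis $\Omega\in L^q(\mathbb{S}^{n-1})$ with $q>1$. Keeping every constant dependent only on $(d,n,p,q)$ and never on the coefficients of $P$---a control supplied by the normalization $|a_0|=1$ and the dilation invariance at the start of the inductive step---is what allows the induction to close and what ultimately yields the coefficient-independent operator norm asserted in the theorem.
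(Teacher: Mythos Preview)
The paper does not prove Theorem~\ref{LZ}; it is quoted from \cite{LZ}. However, the machinery of that proof is reproduced in Lemma~2.3 and in the proof of Theorem~\ref{mainresult}, so your sketch can be compared against that. At the level of strategy---induction, dilation normalization, local/global split, oscillatory decay at large scales via a $TT^{*}$/van der Corput argument, H\"older against $\Omega\in L^{q}$---you are on the right track. Two steps, though, are not as written and differ materially from the \cite{LZ} scheme.

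\textbf{Local part.} Your expansion $e^{\iota P(x,y)}=e^{\iota P(x,x)}+\bigl(e^{\iota P(x,y)}-e^{\iota P(x,x)}\bigr)$ does not give a coefficient-free error. The bound $|e^{\iota Q}-1|\le |Q|$ with $Q(x,y)=P(x,y)-P(x,x)$ yields $|Q|\lesssim |x-y|\cdot M(x)$, where $M(x)$ depends on \emph{all} coefficients of $P$, not only the normalized top one; after integrating against $|K(x-y)|$ on $|x-y|<2$ the resulting bound blows up with the lower-order coefficients. Nor does this error carry ``a phase of degree $<d$'' as you assert---once $|e^{\iota Q}-1|$ is replaced by $|Q|$, the oscillation is gone. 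The approach in \cite{LZ} (mirrored in Section~3 here) uses a double induction on the pair $(m,l)$ of highest $x$- and $y$-degrees among mixed monomials, translates by $h\in\mathbb{R}^n$, and compares $e^{\iota P}$ with $e^{\iota[R(x,y,h)+\sum_{|\alpha|=m,|\beta|=l}a_{\alpha,\beta}(y-h)^{\alpha+\beta}]}$. The polynomial $R$ falls under the inductive hypothesis, the extra $y$-only term is absorbed into $f$, and the difference is controlled by $C|x-y|$ with $C$ depending \emph{only} on the normalized coefficients $\sum_{|\alpha|=m,|\beta|=l}|a_{\alpha,\beta}|=1$. Your induction on the total degree $d$ does not set up this comparison.

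\textbf{Global part.} A direct $T_j^{P}(T_j^{P})^{*}$ computation produces an amplitude $K_j(x-y)\overline{K_j(x'-y)}$ which is rough in $y$; van der Corput does not apply to it. The \cite{LZ} argument (Lemma~2.3 here) avoids this by first performing the polar decomposition $I_Qf(x)=\int_{\mathbb{S}^{n-1}}\Omega(y')\,N_Q[f(z+\cdot\,y')](s)\,d\sigma(y')$, thereby isolating the roughness in the angular variable, and then running $N_Q^{*}N_Q$ on the \emph{one-dimensional} operator $N_Q$, whose kernel $1/(s-t)$ on a dyadic interval is smooth. Van der Corput is applied in the auxiliary radial variable of the $N_Q^{*}N_Q$ kernel, Lemma~\ref{poly} handles the polynomial factor, and only afterwards does H\"older in $y'\in\mathbb{S}^{n-1}$ bring in $\|\Omega\|_{L^{q}}$. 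Your final paragraph hints at this, but the preceding description of the phase as $P(x,y)-P(x',y)$ with ``van der Corput in $y$'' is not the same mechanism and would not go through as stated.
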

	In the same paper Shanzhen and Yan~ \cite{LZ} also proved that $T^{P}_{\Omega,*}$ is bounded in $L^p(\R^n), 1<p<\infty$ with $\Omega\in L^{\infty}(\mathbb{S}^{n-1}).$ 
	Jiang and Lu~\cite{JL} improved the condition that $\Omega \in L^q(\mathbb{S}^{n-1}), 1<q\leq \infty$ in Theorem~\ref{LZ} to $\Omega\in L\log L(S^{n-1})$. Recently,  Chen and Tao~\cite{CT} obtained weighted $L^{p}-$estimates for the operator $T^{P}_{\Omega,*}$ for $1<p<\infty$ where $\Omega\in L^\infty(\mathbb{S}^{n-1})$.  Motivated by these developments about oscillatory rough singular integrals we study sparse bounds for oscillatory rough singular integral operators $T^{P}_{\Omega}$ and $T^{P}_{\Omega,*}$. 
	
	\subsection{Main results}\label{results}
	The aim of this paper is to establish sparse bounds for the oscillatory rough singular integral operators $T^{P}_{\Omega}$ and $T^{P}_{\Omega,*}$. It is well-known that sparse bounds strengthen the weighted $L^p-$estimates for the operator with quantitative bounds on the operator norm and yield vector-valued results as a consequence. In this section, we will state the main results establishing sparse bounds for the operators under consideration. This requires some preparation. 
	
	\subsection*{Sparse family of cubes:} Let $ \mathcal D _{t}, 1\leq t \leq 3 ^{n}$ denote the collection of shifted dyadic cubes. We know that the collection 
	\begin{equation*}
		\{ \tfrac 13  Q  \;:\; Q\in \mathcal D_t,\  l(Q)= 2 ^{j} \}
	\end{equation*}
	forms a partition of $ \mathbb R ^{n}$ for each $j$. We will simply refer to cubes in~$ \mathcal D _{t}$ as dyadic cubes and will work with a fixed collection $ \mathcal D _{t}$. The reader is invited to see~\cite{Lerner1} for details about dyadic cubes. 
	
	Let $\mathcal{S}$ be a family of dyadic cubes in $\mathbb{R}^{n}$ and  $\eta \in (0,1)$. We say that $\mathcal{S}$ is an $\eta-$sparse family of cubes if for every $Q\in\mathcal{S}$ there exists a subset $E_{Q}\subseteq Q$ such that $|E_{Q}|>\eta|Q|$ and the collection $\{E_{Q}\}$ is pairwise disjoint. We will simply refer to $\eta-$sparse family as sparse family. 
	
	Given a sparse family $\mathcal{S}$ and exponents $1\leq r,s<\infty$, the $(r,s)-$sparse bilinear form is defined by 
	$$\Lambda_{\mathcal{S},r,s}(f,g):=\sum_{Q\in\mathcal{S}}|Q|\bigg(\frac{1}{|3Q|}\int_{3Q}|f(x)|^{r}dx\bigg)^{1/r}\bigg(\frac{1}{|3Q|}\int_{3Q}|g(x)|^{s}dx\bigg)^{1/s},$$
	where $f$ and $g$ are compactly supported bounded functions.  
	
	\subsection*{$A_p$ weights} Let $\omega$ be a non-negative locally integrable function. Then $\omega\in A_{p}$ for $1<p<\infty$, if there exists a constant $C>0$ such that 
	\begin{eqnarray*}
		[\omega]_{A_{p}}:=\sup_{Q}\bigg(\frac{1}{|Q|}\int_{Q}\omega(x)~dx\bigg)\bigg(\frac{1}{|Q|}\int_{Q}\sigma(x)~dx\bigg)^{p-1}<C,
	\end{eqnarray*}
	where $p'=\frac{p}{p-1}$ and $\sigma(x)=\omega(x)^{1-p'}$.  The $A_{\infty}$ weight characteristic was introduced by Fujii \cite{Fujii} and  Wilson \cite{Wilson}. It is defined as follows 
	\begin{eqnarray*}
		[\omega]_{A_{\infty}}:=\sup_{Q}\big(\int_{Q}\omega(x)~dx\bigg)^{-1}\int_{Q} M(\chi_{Q}\omega)(x)~dx, 
	\end{eqnarray*}
	where $M$ is the Hardy-Littlewood maximal function.
	
	We have the following sparse bounds for the maximal operator $T^{P}_{\Omega,*}$.
	\begin{thm}\label{mainresult}
		Let $\Omega\in L^{\infty}(\mathbb{S}^{n-1})$ be a homogeneous function of degree zero with $\int_{\mathbb{S}^{n-1}}\Omega(\theta)~d\theta=0$.  Let $P(x,y)$ be a real-valued polynomial on $\mathbb{R}^{n}\times \mathbb{R}^{n}$ with total degree $d$. Then for any exponent $r$ with $1<r<2$ and compactly supported bounded functions $f$ and $g$, the following sparse domination holds
		\begin{eqnarray}\label{sparsedomination}
			\lvert  \langle T^{P}_{\Omega,*} f ,g  \rangle\rvert  \leq C \sup_{\mathcal{S}}\Lambda _{\mathcal{S},r,r} (f,g).
		\end{eqnarray}
		Here the constant $C=C(\|T_{\Omega,*}\|_{L^p\rightarrow L^p}, d, n, r)$ is independent of the coefficients of $P(x,y)$. 
	\end{thm}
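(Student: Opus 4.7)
The plan is to combine three strategies: the sparse-domination framework for rough singular integrals developed by Conde-Alonso–Culiuc–Di Plinio–Ou and Hyt\"onen–Roncal–Tapiola, the Krause–Lacey machinery for sparse bounds of maximally truncated operators, and the Ricci–Stein induction on polynomial degree that underlies the coefficient-independent $L^{p}$-bounds of Chen–Tao. First, I linearize the supremum: for each measurable function $\epsilon : \R^{n} \to (0,\infty)$, define
\[
T^{P,\epsilon}_{\Omega} f(x) = \int_{|x-y|>\epsilon(x)} e^{\iota P(x,y)} K(x-y) f(y)\, dy,
\]
so that sparse domination of $T^{P}_{\Omega,*}$ reduces to the uniform (in $\epsilon$) sparse bound for $T^{P,\epsilon}_{\Omega}$. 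I next decompose $K = \sum_{j\in\Z} K_{j}$ with $K_{j}$ supported on $\{|y|\sim 2^{j}\}$, producing a corresponding annular splitting $T^{P,\epsilon}_{\Omega} = \sum_{j}T^{P}_{\Omega,j,\epsilon}$ in which $\epsilon$ enters only through the few scales near $j \sim \log_{2}\epsilon$.

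The argument then proceeds by strong induction on $d = \deg P$. The base case is the additively separable phase $P(x,y)=P_{1}(x)+P_{2}(y)$, where $e^{\iota P(x,y)}$ factors into input/output modulations and the sparse bound is inherited from the known sparse bound for the non-oscillatory operator $T_{\Omega,*}$. For the inductive step, write $P = P_{1}(x)+P_{2}(y)+Q(x,y)$ with $Q$ a pure cross polynomial, and use the Ricci–Stein normalization to reduce to the case where a distinguished coefficient of $Q$ has unit modulus; because the $L^{p}$ norm of $T^{P}_{\Omega,*}$ from Chen–Tao is already independent of the coefficients of $P$, this rescaling does not damage the final constant.

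The technical core is a single-scale bilinear estimate: for $f,g$ localized to a cube $Q_{0}$ of sidelength $2^{s}$ and any $1<r<2$,
\[
|\langle T^{P}_{\Omega,j,\epsilon} f, g\rangle|\ \leq\ C_{j-s}\, |Q_{0}|\, \bigg(\frac{1}{|3Q_{0}|}\int_{3Q_{0}}|f|^{r}\bigg)^{1/r}\bigg(\frac{1}{|3Q_{0}|}\int_{3Q_{0}}|g|^{r}\bigg)^{1/r},
\]
with $\sum_{k} C_{k}<\infty$ uniformly in $P$ and $\epsilon$. For $|j-s|$ small (the diagonal scales) the estimate comes from the coefficient-independent $L^{p}$ boundedness of $T^{P}_{\Omega,*}$ together with Chebyshev. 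For $|j-s|$ large, a $TT^{*}$ argument exposes the leading cross monomial of $Q$, which after stationary phase reduces the effective phase to polynomial degree strictly less than $d$, so the inductive hypothesis yields $C_{k}\lesssim 2^{-\delta|k|}$. With this single-scale estimate in hand, the sparse collection $\mathcal{S}$ is produced by the standard Calder\'on–Zygmund stopping-time recursion on $f$ and $g$, yielding the bilinear sparse form $\Lambda_{\mathcal{S},r,r}(f,g)$.

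The main obstacle is achieving the summable off-diagonal decay $C_{k}\lesssim 2^{-\delta|k|}$ uniformly in the coefficients of $P$. Since $\Omega$ carries no regularity, kernel smoothness cannot be traded for decay and the gain must come solely from the polynomial oscillation, which is precisely why the Ricci–Stein degree induction is forced. A secondary subtlety is the maximal truncation: a Cotlar inequality replacing $T^{P}_{\Omega,*}$ by $M(T^{P}_{\Omega}f)$ would introduce an extra maximal factor that is incompatible with the $(r,r)$ form, so the linearization in $\epsilon$ and direct annular control must be maintained throughout.
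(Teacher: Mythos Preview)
Your proposal assembles the right ingredients---Ricci--Stein degree induction, coefficient normalization, $TT^{*}$/Van der Corput, and the known sparse bound for $T_{\Omega,*}$---but it misplaces the roles they play, and this creates a genuine gap.

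The central problem is the claimed single-scale estimate with decay $C_{j-s}\lesssim 2^{-\delta|j-s|}$ uniform in the coefficients of $P$. After the Ricci--Stein normalization, the critical oscillation scale of the top cross monomial is fixed at $\sim 1$, not at $2^{s}$. Van der Corput (via $TT^{*}$) then gives decay $2^{-\delta j}$ in the \emph{absolute} kernel scale $j\geq 0$, not in $|j-s|$. For $j$ below the critical scale the phase does not oscillate enough to help, and because $\Omega$ is merely $L^{\infty}$ there is no kernel regularity to manufacture decay either. So the off-diagonal summability you need for a stopping-time construction is simply not available in that regime. Your sentence ``after stationary phase reduces the effective phase to polynomial degree strictly less than $d$, so the inductive hypothesis yields $C_{k}\lesssim 2^{-\delta|k|}$'' is also not how the mechanism works: Van der Corput produces a bound, not a phase reduction, and the inductive hypothesis is a sparse bound, not a source of geometric decay.

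The paper's argument avoids all of this by exploiting the normalization differently. Once $\sum_{|\alpha|=m,|\beta|=l}|a_{\alpha,\beta}|=1$, the operator is split at the \emph{fixed} scale $2$ into a local part ($|x-y|<2$) and a far part ($|x-y|\geq 2$). The far part is dominated by $Mf+\sum_{j\geq 2}|T^{P}_{j}f|$; for each $j\geq 2$ one tiles by cubes of sidelength $2^{j+2}$, and a direct $TT^{*}$/Van der Corput estimate (Lemma~\ref{mainlemma}) plus interpolation gives $|\langle I_{Q}f,g\rangle|\lesssim 2^{-\eta j}|Q|\langle f\rangle_{Q,r}\langle g\rangle_{Q,r}$, which sums in $j$ with no stopping time at all. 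The local part is where the induction is actually used: one writes $P(x,y)=\sum a_{\alpha,\beta}(x-h)^{\alpha}(y-h)^{\beta}+R(x,y,h)$, compares the oscillatory factor to $e^{\iota[R+\sum a_{\alpha,\beta}(y-h)^{\alpha+\beta}]}$ (a phase covered by the induction hypothesis, since its cross part has lower bidegree), and controls the difference by $|x-y|$, which is dominated by $Mf$. This is the step your outline is missing; the induction lives on the small-scale side, not the large-scale side.
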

	Next result proves sparse domination for the operator $T^{P}_{\Omega}$.
	\begin{thm}\label{Lq}
		Let $\Omega\in L^{p}(\mathbb{S}^{n-1}), p>1$, be a homogeneous function of degree zero with $\int_{\mathbb{S}^{n-1}}\Omega(\theta)~d\theta=0$. Let $P(x,y)$ be a real-valued polynomial on $\mathbb{R}^{n}\times \mathbb{R}^{n}$ with total degree $d$. Then for any $r,s$ with $(2p)'<r<2,~ p'\leq s$ and compactly supported bounded functions $f$ and $g$,  the following sparse domination result holds
		\begin{eqnarray}\label{sparsedomination1}
			\lvert  \langle T^{P}_{\Omega} f ,g  \rangle\rvert  \leq C \sup_{\mathcal{S}}\Lambda _{\mathcal{S},r,s} (f,g).
		\end{eqnarray}
		Here the constant $C=C(\|T_{\Omega}\|_{L^p\rightarrow L^p}, d, n, r)$  is independent of the coefficients of $P(x,y)$.     
	\end{thm}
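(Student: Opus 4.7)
The plan is to adapt the inductive strategy used in the proof of Theorem~\ref{mainresult} to the rougher $L^{p}$ setting. I would argue by induction on the total degree $d$ of $P(x,y)$. For the base case, once all genuine cross-terms between $x$ and $y$ have been stripped, the operator collapses to the classical rough singular integral $T_{\Omega}$, and the sparse bound of Conde-Alonso, Culiuc, Di Plinio and Ou~\cite{Conde} furnishes an $(r,p')$-sparse form for any $(2p)'<r<2$, which seeds the induction.

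For the inductive step I would perform a dyadic annular decomposition $T^{P}_{\Omega}=\sum_{j\in\Z}T^{P}_{\Omega,j}$ with kernels $K_{j}(u)=K(u)\psi(2^{-j}|u|)$ for a smooth bump $\psi$ supported on $[1/2,2]$. For each stopping cube $Q_{0}$, after an affine renormalization that is admissible because the final bound must not depend on coefficients, one assigns a critical scale $j_{0}$ determined by the leading homogeneous part of $P$. The low-scale contribution $\sum_{j\le j_{0}}T^{P}_{\Omega,j}$ is essentially non-oscillatory and is absorbed into a sparse form via the base case applied to $T_{\Omega}$. For the high scales $j>j_{0}$ one would establish a single-scale decay estimate
\[
\|T^{P}_{\Omega,j}\|_{L^{2}\to L^{2}}\le C\,2^{-\delta(j-j_{0})}
\]
with $\delta=\delta(d,n)>0$, through a $TT^{*}$ computation: the composition kernel carries a polynomial phase of degree at most $d$, and van der Corput together with the induction hypothesis applied to the lower-degree derivatives of the phase delivers the geometric decay.

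With the single-scale estimate in place, the sparse form is assembled by the standard Lerner--Lacey stopping-time recursion: one iterates on principal cubes where the local $L^{r}$ average of $f$ or the local $L^{s}$ average of $g$ first doubles. Interpolating the decaying $L^{2}$ bound against the unweighted $L^{p}\to L^{p}$ bound from Theorem~\ref{LZ} yields a single-scale weak-type inequality that closes the recursion in the range $(2p)'<r<2$ and $s\ge p'$; the appearance of $p'$ on the $g$-side is dictated by H\"older's inequality applied to the rough angular factor $\Omega$ against $g$ on the sphere, while the lower bound $r>(2p)'$ is exactly what is needed to trade the $L^{p}$ cost of $\Omega$ against the $L^{2}$-improving gain coming from the oscillation.

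The main obstacle is proving the single-scale $L^{2}$-decay estimate with a rate $\delta$ that depends only on the degree $d$ and is uniform in the coefficients of $P$, when $\Omega$ is merely in $L^{p}(\mathbb{S}^{n-1})$. In the $L^{\infty}$ setting of Theorem~\ref{mainresult}, the kernel of $T^{P}_{\Omega,j}(T^{P}_{\Omega,j})^{*}$ can be estimated by pointwise size together with van der Corput, but for rough $\Omega$ the resulting angular factor must be controlled in $L^{p}$ of the sphere while still exploiting the cancellation $\int_{\mathbb{S}^{n-1}}\Omega(\theta)\,d\theta=0$ against the oscillatory phase. Carrying out this rough $TT^{*}$ analysis, and verifying that the resulting decay rate is genuinely independent of the coefficients of $P$ and only depends on $d$, is the crux of the proof.
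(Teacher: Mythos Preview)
Your architecture---induction on the polynomial, dyadic annular decomposition, single-scale $L^{2}$ decay via $TT^{*}$, and a sparse recursion---is the right shape and broadly parallels the paper. But you stop at precisely the point where the paper supplies the missing idea, and your proposed interpolation is not the one that actually works.

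The paper does \emph{not} attempt a direct rough $TT^{*}$ with $\Omega\in L^{p}$. Instead it performs a level-set decomposition
\[
\Omega=\sum_{k\ge 0}\Omega_{k},\qquad \Omega_{k}=\Omega\,\chi_{\{2^{k-1}\le|\Omega|<2^{k}\}},
\]
so that each $\Omega_{k}\in L^{\infty}$ with $\|\Omega_{k}\|_{\infty}\le 2^{k}$ and $\|\Omega_{k}\|_{q}\le 2^{k}|E_{k}|^{1/q}$. The $TT^{*}$ argument of Lemma~\ref{mainlemma} is then applied verbatim to each $\Omega_{k}$, giving $L^{2}$ decay $2^{-j\delta/2}$ with constant $\|\Omega_{k}\|_{q}$. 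This is interpolated not against the global $L^{p}\to L^{p}$ bound of Theorem~\ref{LZ}, as you propose, but against the trivial single-scale estimate $\|I_{Q,k}f\|_{\infty}\lesssim |Q|^{-1}\|\Omega_{k}\|_{\infty}\|f\chi_{Q}\|_{1}$; interpolation against an on-diagonal $L^{p}\to L^{p}$ bound would not produce the $L^{r}\to L^{r'}$ improving estimate with the correct $|Q|^{-\theta}$ localization needed for the sparse form. Summing the interpolated bound over $k$ yields a factor $\|\Omega\|_{L^{q/(1-\theta),1}\log L}$ with $1-\theta=2/r'$, and the embedding $L^{p}(\mathbb{S}^{n-1})\hookrightarrow L^{q/(1-\theta),1}\log L(\mathbb{S}^{n-1})$ is exactly what forces $r>(2p)'$. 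This---not trading an $L^{p}$ cost against an oscillatory gain---is the mechanism behind that threshold.

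Two further structural differences are worth noting. The paper's induction (following~\cite{LZ}) is on the pair $(m,l)$ of highest degrees in $x$ and $y$ separately, not on the total degree; a dilation normalizes $\sum_{|\alpha|=m,|\beta|=l}|a_{\alpha,\beta}|=1$, after which every scale $j\ge 2$ is already ``high'' with geometric decay, so there is no critical scale $j_{0}$ to track. And the constraint $s\ge p'$ enters through the local piece, where the analogue of Lemma~\ref{localsparse} invokes the sparse bound for $T_{\Omega}$ from~\cite{Conde} and a H\"older estimate on the annulus $\{1/2<|y|<3\}$, rather than through the single-scale high-frequency analysis.
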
		
	As mentioned before, sparse domination implies weighted and vector-valued estimates for the underlying operator. We record here the weighted result for the maximal operator $T^{P}_{\Omega,*}$, which can be obtained as a consequence of Theorem~\ref{mainresult}. We refer the reader to~ \cite{Nieraeth} for obtaining vector-valued results as a consequence of sparse domination. 
	\begin{coro}\label{weightedresult}
		Let $1<p<\infty$ and $\omega\in A_{p}$. Let $\Omega\in L^{\infty}(\mathbb{S}^{n-1})$ be a homogeneous function with $\int_{\mathbb{S}^{n-1}}\Omega(\theta)~d\theta=0$  and $P(x,y)$ be a real-valued polynomial on $\mathbb{R}^{n}\times \mathbb{R}^{n}$.  Then we have 
		$$\Vert T^{P}_{\Omega,*}f\Vert_{L^{p}(\omega)}\leq C[\omega]^{1/p}_{A_{p}}\big([\omega]^{1/p'}_{A_{\infty}}+[\sigma]^{1/p}_{A_{\infty}}\big)\max\{[\sigma]_{A_{\infty}},[\omega]_{A_{\infty}}\}\Vert f\Vert_{L^{p}(\omega)}.$$
	\end{coro}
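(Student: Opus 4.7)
The plan is to derive the weighted estimate directly from the sparse domination of Theorem~\ref{mainresult}, combined with the sharp reverse H\"older inequality for $A_{\infty}$ weights and the known quantitative weighted bound for $(1,1)$-type sparse forms. Fix $1<p<\infty$ and $\omega\in A_p$ and set $\sigma:=\omega^{1-p'}$. By standard duality,
$$\Vert T^{P}_{\Omega,*}f\Vert_{L^{p}(\omega)} \;=\; \sup\big\{ |\langle T^{P}_{\Omega,*}f,g\rangle| \;:\; \|g\|_{L^{p'}(\sigma)}\leq 1 \big\},$$
where the supremum is over compactly supported bounded $g$, so it is enough to control the bilinear pairing by the stated product of weighted norms. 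Theorem~\ref{mainresult} then reduces the task to bounding a sparse bilinear form $\Lambda_{\mathcal{S},r,r}(f,g)$ for any admissible $r\in(1,2)$.

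The crucial choice is that of $r$. Since $\omega,\sigma\in A_{\infty}$, the sharp reverse H\"older inequality of Hyt\"onen--P\'erez supplies a dimensional constant $c_n>0$ such that, taking
$$r:=1+\frac{c_n}{\max\{[\omega]_{A_{\infty}},[\sigma]_{A_{\infty}}\}},$$
both weights satisfy the reverse H\"older estimate $\langle w^{r}\rangle_Q^{1/r}\leq 2\langle w\rangle_Q$ on every cube $Q$. This $r$ belongs to $(1,2)$ and so is admissible in Theorem~\ref{mainresult}. Applying H\"older's inequality on each factor of $\Lambda_{\mathcal{S},r,r}(f,g)$ and using the reverse H\"older bound, I can replace the $L^{r}$-averages of $f$ and $g$ by $L^{1}$-averages weighted against $\omega$ and $\sigma$, at a total cost comparable to $\max\{[\omega]_{A_{\infty}},[\sigma]_{A_{\infty}}\}^{1/p+1/p'}=\max\{[\omega]_{A_{\infty}},[\sigma]_{A_{\infty}}\}$. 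The residual form is the standard $(1,1)$ sparse form, whose $L^{p}(\omega)\to L^{p}(\omega)$ weighted norm, due to the Hyt\"onen--P\'erez--Rela bound, contributes precisely $[\omega]_{A_p}^{1/p}\bigl([\omega]_{A_{\infty}}^{1/p'}+[\sigma]_{A_{\infty}}^{1/p}\bigr)$. Multiplying these two contributions yields the stated inequality for compactly supported bounded $f$, and a routine density argument extends it to all $f\in L^{p}(\omega)$.

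The main obstacle will be the quantitative bookkeeping in the reverse H\"older step: one must verify that the losses incurred when trading $L^{r}$ averages for weighted $L^{1}$ averages combine to give \emph{exactly} the factor $\max\{[\sigma]_{A_{\infty}},[\omega]_{A_{\infty}}\}$ appearing in the statement, with no additional hidden dependence on the weight characteristics. This kind of computation is by now a standard template in the sparse-domination literature (as in the works of Conde-Alonso--Culiuc--Di Plinio--Ou or Bernicot--Frey--Petermichl), and I would follow that template to close the argument.
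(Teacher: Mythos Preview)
Your approach is exactly what the paper does: it gives no separate argument for the corollary but simply remarks that the quantitative weighted bound follows from Theorem~\ref{mainresult} via the computation in \cite{Hytonen}, which is precisely the reverse-H\"older optimization you outline.

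One correction to your bookkeeping, since you flagged it as the main obstacle: the factor $\max\{[\sigma]_{A_\infty},[\omega]_{A_\infty}\}$ does not arise from the passage from $(r,r)$-averages to $(1,1)$-averages---with your choice of $r$ that step costs only an absolute constant, and the residual $(1,1)$ form then contributes $[\omega]_{A_p}^{1/p}\bigl([\omega]_{A_\infty}^{1/p'}+[\sigma]_{A_\infty}^{1/p}\bigr)$ as you say. The extra $\max$ factor comes instead from the $r$-dependence of the sparse constant $C=C(r)$ in Theorem~\ref{mainresult}, which blows up like $(r-1)^{-1}$ as $r\to 1^+$: in the proof, the decay exponent $\eta$ in \eqref{lr} satisfies $\eta\sim 1/r'$, so the sum over $j\geq 2$ contributes a factor $\sim r'\sim (r-1)^{-1}$, and the local piece inherits the same rate from the sparse bound for $T_{\Omega,*}$ in \cite{Hytonen}. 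Substituting $r-1\sim c_n/\max\{[\sigma]_{A_\infty},[\omega]_{A_\infty}\}$ is what produces the stated factor. If you ignore the $r$-dependence of $C$ you will miss this factor; if you also charge it to the reverse-H\"older step you will double-count it.
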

	The quantitative bounds on weighted characteristic in the theorem above follow from  \cite{Hytonen}. Note that this recovers the weighted estimates proved in~\cite{CT}.
	
	\section{Auxiliary results}
	The proof of sparse domination result in Theorem~\ref{mainresult} uses the ideas from Lacey and Spencer~ \cite{Lacey}. However, note that in Lacey and Spencer~ \cite{Lacey} it is important that the kernel $K$ satisfies $C^1$ regularity condition away from the origin. The regularity property in the case of a rough kernel is not available. We get around this difficulty by making use of the ideas from Shanzhen and Yan~ \cite{LZ}. 
	
	First, we recall a well-known fact about sparse bilinear forms. We will require this result in our proofs. 
	\begin{prop}[\cite{Mena}, Section $4$]\label{universalsparse}
		Given $1\leq r,s<\infty$, and  compactly supported bounded functions $f,g$, there exists a single sparse form $\Lambda_{\mathcal{S}_{0},r,s}$ such that 
		\begin{eqnarray*}
			\sup_{\mathcal{S}}\Lambda_{\mathcal{S},r,s}(f,g)\lesssim \Lambda_{\mathcal{S}_{0},r,s}(f,g).
		\end{eqnarray*}  
	\end{prop}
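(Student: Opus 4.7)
The plan is to produce the universal sparse family $\mathcal{S}_0$ by a stopping-time argument driven by the input pair $(f,g)$, and then to show that $\Lambda_{\mathcal{S},r,s}(f,g)$ for any other sparse family $\mathcal{S}$ collapses onto $\Lambda_{\mathcal{S}_0,r,s}(f,g)$ by grouping the cubes of $\mathcal{S}$ under their minimal $\mathcal{S}_0$-ancestors.

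First I would fix a large dyadic cube $Q^{*}$ with $\supp f\cup\supp g\subset Q^{*}$ and, for each dyadic $P\subseteq Q^{*}$, set
\[a(P):=\Big(\tfrac{1}{|3P|}\int_{3P}|f|^{r}\Big)^{1/r},\qquad b(P):=\Big(\tfrac{1}{|3P|}\int_{3P}|g|^{s}\Big)^{1/s},\]
so that the summand of $\Lambda_{\mathcal{S},r,s}$ at $Q$ is $|Q|\,a(Q)\,b(Q)$. Build $\mathcal{S}_0$ recursively: include $Q^{*}$; declare the $\mathcal{S}_0$-children of $P\in\mathcal{S}_0$ to be the maximal dyadic cubes $P'\subsetneq P$ for which $a(P')>\kappa\,a(P)$ or $b(P')>\kappa\,b(P)$, where $\kappa>1$ is a jump constant to be chosen in the next step.

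Second I would check that $\mathcal{S}_0$ is sparse. Since $P'\subsetneq P$ dyadically forces $3P'\subseteq 3P$, whenever $P'$ is a child of $P$ arising from the $a$-condition, one has $P'\subseteq\{x\in P:M(|f|^{r}\chi_{3P})(x)>\kappa^{r}a(P)^{r}\}$, and symmetrically for the $b$-condition. The weak $(1,1)$ bound for the Hardy--Littlewood maximal function $M$ then bounds the union of all $\mathcal{S}_0$-children of $P$ by $C\,3^{n}(\kappa^{-r}+\kappa^{-s})|P|$, which is $\leq\tfrac12|P|$ once $\kappa$ is taken sufficiently large (depending only on $n,r,s$). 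Hence $E_{P}:=P\setminus\bigcup\{\mathcal{S}_0\text{-children of }P\}$ satisfies $|E_{P}|\geq\tfrac12|P|$, the family $\{E_{P}\}_{P\in\mathcal{S}_0}$ is pairwise disjoint, and $\mathcal{S}_0$ is $\tfrac12$-sparse.

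Finally, given any sparse family $\mathcal{S}$, define $\pi:\mathcal{S}\to\mathcal{S}_0$ by sending $Q\in\mathcal{S}$ to the minimal $\mathcal{S}_0$-cube containing $Q$. By the stopping rule, $a(Q)\leq\kappa\,a(\pi(Q))$ and $b(Q)\leq\kappa\,b(\pi(Q))$. Since $\mathcal{S}$ is $\eta$-sparse, the disjoint major subsets of the cubes $\{Q\in\mathcal{S}:\pi(Q)=P\}$ all lie in $P$, so $\sum_{\pi(Q)=P}|Q|\leq\eta^{-1}|P|$. Summing,
\[\Lambda_{\mathcal{S},r,s}(f,g)=\sum_{P\in\mathcal{S}_0}\sum_{\pi(Q)=P}|Q|\,a(Q)\,b(Q)\leq\kappa^{2}\eta^{-1}\sum_{P\in\mathcal{S}_0}|P|\,a(P)\,b(P)=\kappa^{2}\eta^{-1}\Lambda_{\mathcal{S}_0,r,s}(f,g),\]
which is the required bound with an implicit constant depending only on $n,r,s,\eta$. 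The main obstacle is the sparseness verification: the factor-$3$ enlargement in $a(P),b(P)$ forces one to control a maximal function taken over $3P$ while still measuring the bad set inside $P$, which is what dictates the choice of $\kappa$ slightly larger than the naive guess $2$; once this enlargement issue is handled, the rest is a bookkeeping exercise.
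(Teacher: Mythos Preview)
The paper does not give its own proof of this proposition; it is simply quoted from Lacey--Mena~\cite{Mena}. Your stopping-time (principal cubes) construction---select the dyadic cubes at which $\langle f\rangle_{3P,r}$ or $\langle g\rangle_{3P,s}$ jumps by a fixed factor $\kappa$, verify sparseness via the weak $(1,1)$ bound for $M$, and then absorb any other sparse family through the Carleson packing estimate $\sum_{\pi(Q)=P}|Q|\leq\eta^{-1}|P|$---is exactly the argument carried out in that reference, so your approach coincides with the cited one.

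One technical point you should tighten: the map $\pi:\mathcal{S}\to\mathcal{S}_0$ is only defined for cubes $Q\in\mathcal{S}$ with $Q\subseteq Q^{*}$, yet an arbitrary sparse family can contain cubes $Q\supsetneq Q^{*}$, or cubes disjoint from $Q^{*}$ whose triples still meet $\supp f\cup\supp g$. For the former, since $\supp f\cup\supp g\subset Q^{*}$ one has $|Q|\,a(Q)\,b(Q)\lesssim (|Q|/|Q^{*}|)^{1-1/r-1/s}|Q^{*}|\,a(Q^{*})\,b(Q^{*})$, and the sparse chain above $Q^{*}$ sums geometrically to a constant multiple of $|Q^{*}|\,a(Q^{*})\,b(Q^{*})$ provided $1/r+1/s>1$; the latter are finitely many per scale and are handled the same way after enlarging $Q^{*}$. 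Note that when $1/r+1/s\leq1$ the supremum over sparse families is in fact infinite, so the proposition tacitly requires $1/r+1/s>1$; in all applications in this paper one has $1<r,s<2$, so this is harmless.
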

	In this section, we develop results that are required in proving Theorems~\ref{mainresult} and \ref{Lq}. In order to prove the desired $L^p$ bounds, we decompose the original operator into dyadic blocks away from the origin. More precisely, we write 
	\begin{equation*}
		\chi_0+\sum_{j=2} ^{\infty} \chi_j  =1,
	\end{equation*}
	where $\chi_j=\chi_{[2^{j-1},2^{j})}$ for $j\geq2$ and $\chi_0=\chi_{[0,2)}$.  This decomposes the operator  $T_{\Omega}^{P}=T_{0}^{P}+\sum\limits_{j\geq 2} T_{j}^{P}$. The same decomposition is carried out to deal with the maximal operator. We will prove suitable estimates for each piece $T_{j,*}^{P}$.  
	
	Let us first consider the local version in the context of truncated maximal rough singular integral operator defined by 
	\begin{eqnarray*}
		T_{\Omega,*}^{0}f(x):=\sup_{0<\epsilon<2}\Big|\int_{\epsilon<|x-y|<2}\frac{\Omega((x-y)')}{|x-y|^{n}}f(y)~dy\Big|,
	\end{eqnarray*}
	where $f$ is a compactly supported smooth function. The following sparse domination result holds for the operator $T_{\Omega,*}^{0}$.
	\begin{lemma}\label{localsparse}
		Given $1<r<2$ and  compactly supported bounded functions $f$ and $g$, there exists a positive constant $C$ such that the following  sparse domination holds 
		\begin{eqnarray}
			|\langle T_{\Omega,*}^{0}f,g\rangle|\leq C\sup_{\mathcal{S}}\sum_{Q\in\mathcal{S}}|Q|\langle f\rangle_{Q,r}\langle g\rangle_{Q,r}.
		\end{eqnarray}
	\end{lemma}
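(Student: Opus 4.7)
The plan is to prove Lemma~\ref{localsparse} via the Lerner--Nazarov recursive stopping-time construction, exploiting the fact that the kernel of $T_{\Omega,*}^{0}$ is supported in $\{|z|<2\}$, which yields a natural spatial localization. By Proposition~\ref{universalsparse}, it suffices to exhibit a single sparse family $\mathcal{S}$ dominating $|\langle T_{\Omega,*}^{0}f,g\rangle|$. Since $\Omega\in L^{\infty}(\mathbb{S}^{n-1})$ and the (truncated) kernel is a bounded Calder\'on--Zygmund kernel with cancellation, standard theory makes $T_{\Omega,*}^{0}$ weak $(1,1)$ and bounded on $L^{r}$ for every $r>1$. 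I would first tile $\mathbb{R}^{n}$ by disjoint dyadic cubes $\{R_{k}\}$ of side length $2$; for $x\in R_{k}$ the support of the kernel gives $T_{\Omega,*}^{0}f(x)=T_{\Omega,*}^{0}(f\chi_{3R_{k}})(x)$, so the proof reduces to obtaining, uniformly in $k$, an $R_{k}$-sparse bound for $|\langle T_{\Omega,*}^{0}(f\chi_{3R_{k}}),g\chi_{R_{k}}\rangle|$.

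Fix $Q_{0}=R_{k}$, write $M_{r}h:=M(|h|^{r})^{1/r}$, and define
$$E(Q_{0}):=\bigl(\{M_{r}f>C\langle f\rangle_{3Q_{0},r}\}\cup\{M_{r}g>C\langle g\rangle_{3Q_{0},r}\}\cup\{T_{\Omega,*}^{0}(f\chi_{3Q_{0}})>C\langle f\rangle_{3Q_{0},r}\}\bigr)\cap Q_{0}.$$
Combining the weak-$L^{r}$ bound for $M_{r}$ with the $L^{r}$ bound for $T_{\Omega,*}^{0}$, one can choose $C$ so that $|E(Q_{0})|\leq\tfrac{1}{2}|Q_{0}|$. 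Let $\{Q_{j}\}$ be the maximal dyadic subcubes of $Q_{0}$ with $|Q_{j}\cap E(Q_{0})|>\tfrac{1}{2}|Q_{j}|$; then $\sum_{j}|Q_{j}|\leq|Q_{0}|$, giving $\tfrac{1}{2}$-sparsity. Splitting
$$\int_{Q_{0}}T_{\Omega,*}^{0}(f\chi_{3Q_{0}})\,g=\int_{Q_{0}\setminus\bigcup_{j}Q_{j}}T_{\Omega,*}^{0}(f\chi_{3Q_{0}})\,g+\sum_{j}\int_{Q_{j}}T_{\Omega,*}^{0}(f\chi_{3Q_{0}})\,g,$$
on the good set $Q_{0}\setminus\bigcup_{j}Q_{j}$ the pointwise bounds $T_{\Omega,*}^{0}(f\chi_{3Q_{0}})\leq C\langle f\rangle_{3Q_{0},r}$ and $|g|\leq M_{r}g\leq C\langle g\rangle_{3Q_{0},r}$ (the latter via a H\"older step) produce a principal term $\lesssim|Q_{0}|\langle f\rangle_{3Q_{0},r}\langle g\rangle_{3Q_{0},r}$. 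On each $Q_{j}$ decompose $f\chi_{3Q_{0}}=f\chi_{3Q_{j}}+f\chi_{3Q_{0}\setminus3Q_{j}}$; the first piece feeds the iteration on $Q_{j}$, and the sparse family $\mathcal{S}$ is the union of $\{Q_{0}\}$ with the cubes produced across all generations.

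The main obstacle is the non-local tail $T_{\Omega,*}^{0}(f\chi_{3Q_{0}\setminus3Q_{j}})$ on $Q_{j}$: the kernel-support argument yields full locality only when $\ell(Q_{j})$ exceeds a dimensional multiple of the truncation scale $2$, so deeper in the iteration where $\ell(Q_{j})<2$ the tail is genuinely nontrivial. To dispose of it I would bound the pairing of this tail with $g\chi_{Q_{j}}$ using H\"older and the $L^{r}\to L^{r}$ boundedness of $T_{\Omega,*}^{0}$, producing a term that can be absorbed into a sparse contribution of a bounded-order dyadic ancestor $\widehat{Q}_{j}$ of $Q_{j}$ (which, not being a stopped cube, satisfies the universal averages $M_{r}f\lesssim\langle f\rangle_{3\widehat{Q}_{j},r}$). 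The resulting enlargement of the dilation factor in the sparse form can be converted back to $3Q$ averages by inflating the sparse constant $C$, a standard reduction. With the tail so absorbed, iterating the stopping-time procedure closes the argument and produces the claimed sparse bound.
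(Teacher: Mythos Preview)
Your approach is genuinely different from the paper's and contains a real gap at the tail step.

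The paper does \emph{not} run a Lerner--Nazarov recursion on $T_{\Omega,*}^{0}$. Instead it localizes via $g_{h}=g\chi_{B(h,1/2)}$ and splits $f=f_{1}+f_{2}+f_{3}$ according to distance from $h$. The key observation is that for $|x-h|<1/2$ and $f_{1}=f\chi_{B(h,1)}$ one has $T_{\Omega,*}^{0}f_{1}(x)=T_{\Omega,*}f_{1}(x)$ identically (the outer truncation at radius $2$ is vacuous), so the singular part is handed off to the already-established sparse bound for the full maximal rough operator $T_{\Omega,*}$ from Di~Plinio--Hyt\"onen--Li. The pieces $f_{2},f_{3}$ are disposed of by trivial kernel bounds and the Hardy--Littlewood maximal function. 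Thus the paper \emph{imports} the hard work rather than redoing it.

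Your recursion fails precisely where sparse domination for rough operators is known to be delicate. The Lerner--Nazarov tail estimate requires controlling the oscillation of $T(f\chi_{(3Q_{j})^{c}})$ over $Q_{j}$, which classically uses the H\"older/Dini regularity of the kernel; for $\Omega\in L^{\infty}(\mathbb{S}^{n-1})$ with no smoothness this pointwise control is unavailable. Your proposed substitute---H\"older plus the $L^{r}\to L^{r}$ bound for $T_{\Omega,*}^{0}$---produces the wrong exponents: pairing $\|T_{\Omega,*}^{0}(f\chi_{3Q_{0}\setminus 3Q_{j}})\|_{L^{r'}(Q_{j})}$ with $\|g\|_{L^{r}(Q_{j})}$ and invoking $L^{r'}\to L^{r'}$ boundedness yields a term of size $|Q_{0}|\langle f\rangle_{3Q_{0},r'}\langle g\rangle_{Q_{0},r}$, i.e.\ an $(r',r)$ sparse contribution rather than the claimed $(r,r)$ form, and since $r'>2>r$ this is strictly weaker. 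Swapping the roles gives $(r,r')$, equally insufficient. Moreover, your ``bounded-order ancestor $\widehat{Q}_{j}$'' cannot absorb the tail: the kernel support forces the tail to see $f$ on a set of diameter $\sim 2$, whereas $\ell(Q_{j})$ can be arbitrarily small, so the relevant ancestor is $Q_{0}$ itself, not a cube comparable to $Q_{j}$; the claimed inequality $M_{r}f\lesssim\langle f\rangle_{3\widehat{Q}_{j},r}$ on $\widehat{Q}_{j}$ is also not what maximality of the stopping cubes gives (it only gives a good point in $\widehat{Q}_{j}$ where $M_{r}f\lesssim\langle f\rangle_{3Q_{0},r}$). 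The compact support of the truncated kernel does not rescue the argument, because the recursion immediately produces cubes of all small scales and the crude pointwise bound on the tail carries a factor $\log(2/\ell(Q_{j}))$ that does not sum.
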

	\begin{proof} 
		The proof of this result uses the idea from~\cite{LZ}. We decompose the function $f$ suitably into three parts. This allows us to compare the singular part of $T_{\Omega,*}^{0}$ with the corresponding rough singular integral operator (without oscillatory factor). We invoke the sparse domination result for the rough singular integral operator $T_{\Omega,*}$ from~\cite{Hytonen} to get the desired bounds for the singular part. The other parts are dealt with using the kernel estimates directly. More precisely, we consider the following.  
		
		For a fixed $h\in\mathbb{R}^{n}$ decompose $f=f_{1}+f_{2}+f_{3},$ where $f_{1}(y)=f(y)\chi_{\{|y-h|<1\}},~f_{2}(y)=f(y)\chi_{\{1<|y-h|<5/2\}}$ and $f_{3}(y)=f(y)\chi_{\{|y-h|\geq 5/2\}}$. Denote $g_{h}(x)=g(x)\chi_{\{|x-h|<1/2\}}$. 
		
		First, observe that  for $|x-h|<1/2$ we  have  $T_{\Omega,*}^0f_{3}(x)=0$. Therefore, the desired estimate holds trivially for this part. 
		
		Next, consider $T_{\Omega,*}^0f_{2}$. Note that for $|x-h|<1/2$ and $1\leq |y-h|<5/2$ we get that  $1/2<|y|<3$. Therefore,  
		\begin{eqnarray*}
			|T_{\Omega,*}^{0}f_{2}(x)|&\leq& \int_{1/2<|y|<3}\frac{|\Omega(y')|}{|y|^{n}}|f_{2}|(x-y)~dy\\
			&\leq& 2^{n}\Vert \Omega\Vert_{{\infty}}\bigg(\int_{|y|<3}|f_{2}(x-y)|~dy\bigg)\\
			&\leq& C_{n}\Vert \Omega\Vert_{{\infty}} Mf_{2}(x),
		\end{eqnarray*}
		where $M$ denotes the Hardy-Littlewood maximal function. Using sparse domination of the Hardy-Littlewood maximal function, we get that 
		\begin{eqnarray*}
			|\langle T_{\Omega,*}^{0}f_{2},g_{h}\rangle|
			&\leq&C_{n}\Vert \Omega\Vert_{{\infty}} \sum_{Q\in\mathcal{S}_{0}}|Q|\langle f_{2}\rangle_{Q,1}\langle g_h\rangle_{Q,1}.
		\end{eqnarray*}
		Since the estimate above holds uniformly in $h$, we get the result for the function $g$ by writing down $g=\sum\limits_{h\in \Z^n} g_h$, i.e. we have that 
		\begin{eqnarray}
			|\langle T_{\Omega,*}^{0}f_{2},g\rangle|&\leq &C_{n}\Vert \Omega\Vert_{{\infty}} \sum_{Q\in\mathcal{S}_{0}}|Q|\langle f\rangle_{Q,1}\langle g\rangle_{Q,1}.
		\end{eqnarray}
		
		Finally, we consider the term $T_{\Omega,*}^{0}f_{1}$. Observe that for $|x-h|<1/2$ it is easy to verify that 
		$$T_{\Omega,*}^{0}f_{1}(x)=T_{\Omega,*}f_{1}(x).$$ Invoking the sparse domination for $T_{\Omega,*}$ from \cite{Hytonen}, for every fixed $1<r<2$ we get that 
		\begin{eqnarray*}
			|\langle T_{\Omega,*}^{0}f_{1},g_{h}\rangle |&=&|\langle T_{\Omega,*}f_{1},g_{h}\rangle |\\
			&\leq&C\sup_{\mathcal{S}}\sum_{Q\in\mathcal{S}}|Q|\langle f_{1}\rangle_{Q,r}\langle g_{h}\rangle_{Q,r}.
		\end{eqnarray*}
		Here the constant $C$ is independent of $h$. 
		
		Recall that we need to prove the estimate above for an arbitrary compactly supported bounded function $g$. Unlike the situation in the case of  $T_{\Omega,*}^{0}f_{2}$ as above, here we have got $L^r$ averages of $g_h$ with $r>1$. It requires a little work to pass to $g$ from $g_h$ in this case. 
		
		Applying Proposition \ref{universalsparse} we get that there exists a sparse family $\mathcal S_0$ such that 
		\begin{eqnarray*}
			|\langle T_{\Omega,*}^{0}f_{1},g_{h}\rangle |&\leq& C\sum_{Q\in\mathcal{S}_{0}}|Q|\langle f_{1}\rangle_{Q,r}\langle g_{h}\rangle_{Q,r}\\
			&=& \sum_{k\geq1}\sum_{Q\in\mathcal{S}_{0},|Q|=2^{kn}}|Q|\langle f_{1}\rangle_{Q,r}\langle g_{h}\rangle_{Q,r}+\sum_{Q\in\mathcal{S}_{0},|Q|\leq1}|Q|\langle f_{1}\rangle_{Q,r}\langle g_{h}\rangle_{Q,r}
		\end{eqnarray*}
		Since $\supp(f_1)\subset B(h,1)$ and $\supp(g_h)\subset B(h,1/2)$, we note that the cubes of length $2^k, k\geq1$ can intersect non-trivially with $\supp(f_1)$ and $\supp(g_h)$ atmost $2^n$ times. For any such cube $Q$, we can find a cube $Q_0\subset Q$ with $l_{Q_{0}}\leq\sqrt{2}$ such that
		$$\supp(f_1)\cap Q\subset\supp(f_1)\cap Q_0~\text{~~~and~~~}\supp(g_h)\cap Q\subset\supp(g_h)\cap Q_0.$$
		Hence, we get
		\begin{eqnarray*}
			|Q|\langle f_{1}\rangle_{Q,r}\langle g_{h}\rangle_{Q,r}&\leq&C_{n}2^{kn}|Q_0|\left(\frac{1}{2^{kn}|Q_0|}\int_{Q_0}|f_1|^r\right)^{\frac{1}{r}}\left(\frac{1}{2^{kn}|Q_0|}\int_{Q_0}|g_h|^r\right)^{\frac{1}{r}}\\
			&\leq& C_{n}2^{kn(1-\frac{2}{r})}|Q_0|\langle f_{1}\rangle_{Q_0,r}\langle g_{h}\rangle_{Q_0,r}
		\end{eqnarray*}
		This gives the following estimate
		\begin{eqnarray*}
			|\langle T_{\Omega,*}^{0}f_{1},g_{h}\rangle |&\lesssim& \sum_{k\geq1}\sum_{Q\in\mathcal{S},l_{Q}=\sqrt{2}}2^{kn(1-\frac{2}{r})}|Q|\langle f_{1}\rangle_{Q,r}\langle g_{h}\rangle_{Q,r}+\sum_{Q\in\mathcal{S},l_{Q}\leq1}|Q|\langle f_{1}\rangle_{Q,r}\langle g_{h}\rangle_{Q,r}\\
			&\lesssim& \sum_{Q\in\mathcal{S},l_{Q}\leq\sqrt{2}}|Q|\langle f_{1}\rangle_{Q,r}\langle g_{h}\rangle_{Q,r}.
		\end{eqnarray*}
		Here we have used that $1<r<2$. 
		We can finish the proof by writing $g=\sum\limits_{h\in \Z^n} g_h$  as in the previous case. 
	\end{proof}
	Next, we consider the operator for each $j\geq 2$. We need some notation. 
	Let $Q$ be a  dyadic cube in $\mathbb{R}^{n}$ with side length $l_{Q}=2^{j+2}$ for $j\in\mathbb{Z}$. Consider  
	\begin{eqnarray*}
		I_{Q}f(x)&:=&\int_{\R^n} e^{\iota P(x,x-y)} \frac{\Omega(y)\chi_{ j} (|y|)} {|y|^{n}}\left(\chi_{\tfrac 13Q} f\right)(x-y)\; dy.
	\end{eqnarray*}
	
	Observe that $supp(I_{Q}f)\subset Q$. Using polar decomposition, we can write  
	\begin{eqnarray*}
		I_{Q}f(x)&=&\int_{\mathbb{S}^{n-1}}\Omega(y')\int_{2^{j-1}\leq r<2^{j}}e^{\iota P(x,x-ry')}\frac{f\chi_{\frac{1}{3}Q}(x-ry')}{r}~dr d\sigma(y')
	\end{eqnarray*}
	Note that for any $x\in\mathbb{R}^{n}$ and  fixed $y'\in\mathbb{S}^{n-1}$, there exists a hyperplane  $Y$ which is normal to vector $y'$ and passes through the origin such that $x=z+sy'$ for $z\in Y$ and $s\in\mathbb{R}$. Therefore, we can write 
	\begin{eqnarray}
		I_{Q}f(x) \nonumber &=&\int_{\mathbb{S}^{n-1}}\Omega(y')\int_{2^{j-1}\leq r<2^{j}}e^{\iota P(z+sy',z+sy'-ry')}\frac{f\chi_{\frac{1}{3}Q}(z+sy'-ry')}{r}~dr d\sigma(y')\\
		\nonumber &=&\int_{\mathbb{S}^{n-1}}\Omega(y')\int_{2^{j-1}\leq s-t<2^{j}}e^{\iota P(z+sy',z+ty')}\frac{f\chi_{\frac{1}{3}Q}(z+ty')}{s-t}~dr d\sigma(y')\\
		\label{NQ}	&=&\int_{\mathbb{S}^{n-1}}\Omega(y')N_{Q}[f(z+\cdot y')](s).
	\end{eqnarray}
	We have the following $L^2$ bounds for the operator $N_Q$ defined as above. 
	\begin{lemma}\label{mainlemma}
		Let $P(x,y)=\sum\limits_{|\alpha|\leq m,
			|\beta|\leq l}a_{\alpha,\beta}x^{\alpha}y^{\beta}$ be a real-valued polynomial defined on $\R^n\times \R^n$. Then for any dyadic cube $Q$ with side length $l_{Q}=2^{j+2}$, there exists a $\delta>0$ such that  
		\begin{eqnarray*}
			\Vert N_{Q}\Vert_{L^{2}(\mathbb{R})\rightarrow L^{2}(\mathbb{R})}\lesssim 2^{-j\delta/2}\bigg|\sum_{|\alpha|=m,|\beta|=l}a_{\alpha,\beta}y'^{\alpha+\beta}\bigg|^{-\delta/2m},
		\end{eqnarray*} 
		where $y'=y/|y|\in\mathbb{S}^{n-1}$.
	\end{lemma}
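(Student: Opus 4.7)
The plan is to view $N_Q$ as a one-dimensional oscillatory singular integral at scale $2^j$ and establish the $L^2$ bound via the $TT^*$-method combined with van der Corput's lemma and a polynomial sublevel-set estimate. From~\eqref{NQ} one reads off
\begin{equation*}
N_Q g(s) = \int_{2^{j-1}\leq s-t<2^j} e^{\iota\phi(s,t)}\,\frac{g(t)}{s-t}\,dt,
\end{equation*}
where $\phi(s,t):=P(z+sy',z+ty')$, and a direct expansion shows that the coefficient of $s^m t^l$ in $\phi$ equals $\Lambda:=\sum_{|\alpha|=m,|\beta|=l}a_{\alpha,\beta}(y')^{\alpha+\beta}$. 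The trivial bound $\|N_Q\|_{L^2\to L^2}\lesssim 1$ from kernel size is always available and handles the regime of small $|\Lambda|$.

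Next I would apply $TT^*$: since $\|N_Q\|_{L^2\to L^2}^2=\|N_Q^*N_Q\|_{L^2\to L^2}$, I would analyze the kernel
\begin{equation*}
\tilde K(t_1,t_2)=\int e^{\iota[\phi(u,t_2)-\phi(u,t_1)]}\,\frac{\chi(u-t_1)\chi(u-t_2)}{(u-t_1)(u-t_2)}\,du,\qquad \chi:=\chi_{[2^{j-1},2^j)}.
\end{equation*}
As a polynomial in $u$, the phase has degree $m$ with leading coefficient $q(t_2)-q(t_1)$, where $q(t)$ denotes the coefficient of $u^m$ in $\phi(u,t)$, itself a polynomial of degree $l$ in $t$ with leading coefficient $\Lambda$. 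On the relevant support (nonempty only for $|t_1-t_2|<2^{j-1}$, of length $\sim 2^j$), the amplitude and its total variation are both $O(2^{-2j})$, so van der Corput's test of order $m$ yields
\begin{equation*}
|\tilde K(t_1,t_2)|\lesssim 2^{-j}\min\bigl(1,\,|2^{jm}(q(t_1)-q(t_2))|^{-1/m}\bigr).
\end{equation*}

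To convert this pointwise estimate into an $L^2$ operator bound, I would apply Schur's test. Interpolating as $|\tilde K(t_1,t_2)|\lesssim 2^{-j(1+\delta)}|q(t_1)-q(t_2)|^{-\delta/m}$ for a small parameter $\delta>0$ and integrating in $t_2$ over $|t_1-t_2|\lesssim 2^j$, the crucial input is the polynomial sublevel-set lemma: since $t_2\mapsto q(t_1)-q(t_2)$ is a polynomial of degree $l$ in $t_2$ with leading coefficient $-\Lambda$, one has $|\{t_2:|q(t_1)-q(t_2)|<\epsilon\}|\lesssim(\epsilon/|\Lambda|)^{1/l}$. A layer-cake decomposition, balanced at the level where this sublevel bound meets the trivial length bound $\lesssim 2^j$, then yields $\int|\tilde K(t_1,t_2)|\,dt_2\lesssim 2^{-j\delta(1+l/m)}|\Lambda|^{-\delta/m}$, provided $\delta<m/l$. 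Schur's test followed by a square root delivers the claim.

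The main obstacle is the balance in this Schur integration: van der Corput's estimate becomes vacuous precisely on the sublevel sets of $q(t_1)-q(\cdot)$, and it is the polynomial sublevel-set bound — the only step where the lower bound $|\Lambda|$ on the top-degree coefficient really enters — that quantitatively controls these bad sets. Choosing $\delta<\min(1,m/l)$ ensures that the oscillatory and near-diagonal contributions to the Schur integral are of comparable size and produce simultaneous power savings in both $2^j$ and $|\Lambda|$.
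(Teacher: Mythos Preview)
Your proposal is correct and follows essentially the same approach as the paper: both use the $T^*T$ method, bound the composed kernel via van der Corput of order $m$ in the inner variable together with the trivial size bound, interpolate with a small exponent $\delta$, and then control the Schur integral using a polynomial estimate in the outer variable (you phrase it as a sublevel-set bound plus layer-cake, the paper invokes the equivalent Ricci--Stein integral inequality $\int_{|x|\le 1}|p(x-y)|^{-\varepsilon}dx\lesssim |\text{lead}(p)|^{-\varepsilon}$ for $\varepsilon<1/l$). The constraint $\delta<m/l$ and the resulting exponents match; your bound $2^{-j\delta(1+l/m)}|\Lambda|^{-\delta/m}$ for the Schur integral is in fact slightly sharper than the paper's stated $2^{-j\delta}|\Lambda|^{-\delta/m}$, but the paper simply discards the extra factor $2^{-jl\delta/m}\le 1$.
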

	\begin{proof}
		The proof is based on a $T^*T$ argument. We adapt the ideas from~\cite{LZ} to complete our proof. Let $N^{*}_{Q}$ denote the adjoint of the linear operator $N_{Q}$. Note that the kernel of composition operator $N^{*}_{Q}N_{Q}$ is given by 
		\begin{eqnarray*}
			\mathcal{K}_{Q}(u,v):=\int_{\substack{2^{j-1}\leq r-v<2^{j}\\2^{j-1}\leq r-u<2^{j}}}e^{\iota\big(P(z+ry',z+vy')-P(z+ry',z+uy')\big)} \frac{1}{(r-u)(r-v)}~dr.
		\end{eqnarray*}
		Using a change the variables argument, we can write
		\begin{eqnarray*}
			\mathcal{K}_{Q}(u,v)&=&\int_{\substack{1/2\leq r<1\\2^{j-1}\leq 2^jr+v-u<2^{j}}}e^{\iota\big(P(z+2^{j}ry'+vy',z+vy')-P(z+2^{j}ry'+vy',z+uy')\big)} \psi(r)~dr,
		\end{eqnarray*}
		where for fixed $u$ and $v$ we denote 
		$$\psi(r)=\psi_{u,v}(r)=\frac{1}{r(2^{j} r+v-u)}.$$
		Now, it is easy to verify that the following estimate holds. 
		\begin{equation}\label{trivial}
			|\mathcal{K}_{Q}(u,v)|\leq\frac{C}{2^{j}} \chi_{\left[0,2^{j-1}\right]}(|v-u|).
		\end{equation}
		Next, we organize the oscillatory factor in $\mathcal{K}_{Q}$ by writing the polynomial $P(x,y)$ as follows
		$$P(x,y)=\sum_{|\alpha|=m} x^{\alpha}Q_{\alpha}(y)+R(x,y),$$
		where $R(x, y)$ is a polynomial with degree less than $m$ in the $x$ variable and $Q_{\alpha}(y)$ is a polynomial with degree $l$ in $y$ variable. So we can write
		$$\mathcal{K}_{Q}(u,v)=\int_{\substack{\frac{1}{2}\leq r<1\\2^{j-1}<2^{j}r+v-u \leq 2^{j}}} e^{\iota(E+F)} \psi(r) dr,$$
		where
		$$E=(2^{j} r)^{m} \sum_{|\alpha|=m} y'^{\alpha}[Q_{\alpha}(z+vy')-Q_{\alpha}\left(z+uy'\right)],$$
		and $F$ is a polynomial of degree less than $m$ in the variable $r$. 
		
		The Van der Corput lemma from \cite{Stein} gives us the following estimate. 
		$$\left|\int_{1/2}^{1} e^{\iota(E+F)} dr\right|\leq C\left(2^{jm}\left|\sum_{|\alpha|=m}y'^{\alpha}[Q_{\alpha}(z+vy')-Q_{\alpha}(z+uy')]\right|\right)^{-\frac{1}{m}}$$
		Next, integration by parts argument yields the following estimate on the kernel. 
		\begin{eqnarray*}
			|\mathcal{K}_{Q}(u,v)|&\leq&C\left(2^{jm}\left|\sum_{|\alpha|=m}y'^{\alpha}[Q_{\alpha}(z+vy')-Q_{\alpha}(z+uy')]\right|\right)^{-\frac{1}{m}}\left(\|\psi\|_{\infty}+\int_{\substack{1/2\leq r<1\\2^{j-1}\leq 2^jr+v-u<2^{j}}}|\psi'|dr\right)\\
			&\leq& C2^{-j}\left(2^{jm}\left|\sum_{|\alpha|=m}y'^{\alpha}[Q_{\alpha}(z+vy')-Q_{\alpha}(z+uy')]\right|\right)^{-\frac{1}{m}}
		\end{eqnarray*}
		The inequality above along with the estimate \eqref{trivial} implies that  uniformly in $\delta \in(0,1]$,
		we have 
		$$|\mathcal{K}_{Q}(u,v)|\leq C2^{-j}\left(2^{jm}\left|\sum_{|\alpha|=m}y'^{\alpha}[Q_{\alpha}(z+vy')-Q_{\alpha}(z+uy')]\right|\right)^{-\frac{\delta}{m}}\chi_{\left[0,2^{j-1}\right]}(|v-u|).$$
		Therefore, 
		\begin{eqnarray*}
			\int|\mathcal{K}_{Q}(u,v)|dv&=&\int_{|v-u|<2^{j-1}}|\mathcal{K}_{Q}(u,v)|dv\\
			&\leq& C 2^{-j} 2^{-j \delta} \int_{|v-u|<2^{j}}\left|\sum_{|\alpha|=m}y'^{\alpha}[Q_{\alpha}(z+vy')-Q_{\alpha}(z+uy')]\right|^{-\frac{\delta}{m}}dv\\
			&=& C 2^{-j \delta}\int_{|v|<1}\left|\sum_{|\alpha|=m} y'^{\alpha}[Q_{\alpha}\left(z+2^{j}\left(v+\frac{u}{2^{j}}\right)y'\right)-Q_{\alpha}(z+uy')]\right|^{-\frac{\delta}{m}} dv.
		\end{eqnarray*}
		As in~\cite{LZ}, we use the following result from~\cite{Ricci} to estimate the integral in the inequality above. 
		\begin{lemma}\cite{Ricci}\label{poly}
			Suppose $p(x)=\sum\limits_{|\alpha| \leq d} a_{\alpha} x^{\alpha}$ is a polynomial of degree $l$ and $\varepsilon<1 / l$. Then
			$$\sup _{y \in \mathbb{R}^{n}} \int_{|x| \leq 1}|p(x-y)|^{-\varepsilon} d x \leq A_{\varepsilon}\left(\sum_{|\alpha|=l}\left|a_{\alpha}\right|\right)^{-\varepsilon}.$$
		\end{lemma}
		Applying this result with the choice of $\delta \in(0,1]$ such that $\frac{\delta}{m}<\frac{1}{l}$ we obtain
		\begin{eqnarray*}
			\int|\mathcal{K}_{Q}(u,v)|dv&\leq& C2^{-j\delta}\left|\sum_{|\alpha|=m,|\beta|=l} a_{\alpha,\beta}y'^{\alpha+\beta}\right|^{-\frac{\delta}{m}} 2^{-jl \frac{\delta}{m}} \\
			&\leq& C 2^{-j\delta}\left|\sum_{|\alpha|=m,|\beta|=l} a_{\alpha,\beta}y'^{\alpha+\beta}\right|^{-\frac{\delta}{m}}
		\end{eqnarray*}
		Observe that 
		$$\|N^{*}_{Q}N_{Q}\|_{L^{2}(\mathbb{R})\rightarrow L^{2}(\mathbb{R})}\leq \|\mathcal{K}_{Q}\|_1\lesssim   2^{-j\delta}\left|\sum_{|\alpha|=m,|\beta|=l} a_{\alpha,\beta}y'^{\alpha+\beta}\right|^{-\frac{\delta}{m}},$$
		where  $0<\delta<\frac{m}{l}$. This completes the proof of the lemma.
	\end{proof}
	
	\section{Proof of Theorem \ref{mainresult}}
	In view of Proposition~\ref{universalsparse}, it is enough to prove that for $1<r<2$ we have 
	\begin{eqnarray}\label{sparsedomination1}
		\lvert  \langle T^{P}_{\Omega,*} f ,g  \rangle\rvert  \lesssim \sup_{\mathcal{S}} \Lambda _{\mathcal{S},r,r} (f,g).
	\end{eqnarray}
	The proof uses an induction argument on the degree of the polynomial in $x$ and $y$ variables. 	
	Let $m$ and $l$ be natural numbers. Assume that desired sparse domination for $T^{P}_{\Omega,*}$ holds if the polynomial $P$ is a sum of monomials with degree in $x$ less than $m$ and degree in $y$ any finite positive integer. Similarly, we assume that sparse domination for $T^{P}_{\Omega,*}$ holds if  $P$ is a sum of monomials with degree in $y$ less than $l$ and degree in $x$ any positive integer, i.e. we assume that for polynomials in both the cases as above, we have 
	$$|\langle T^{P}_{\Omega,*}f,g\rangle|\lesssim \sup_{\mathcal{S}}\Lambda_{\mathcal{S},r,r}(f,g),~~\text{for any}~~1<r<2.$$
	We consider the polynomial $P(x,y)$ and rewrite it as follows  $P(x,y)=\sum\limits_{|\alpha|=m,|\beta|=l}a_{\alpha,\beta}x^{\alpha}y^{\beta}+R_{0}(x,y)$, where the polynomial $R_{0}$ satisfies the induction hypothesis. We shall prove the sparse domination for $T^{P}_{\Omega,*}$ associated with $P$. 
	
	First, observe that without loss of generality, we may assume that $\sum\limits_{|\alpha|=m,|\beta|=l}|a_{\alpha,\beta}|>0$. Further, it is easy to observe using a standard dilation argument that it is enough to prove the result with the assumption that $\sum\limits_{|\alpha|=m,|\beta|=l}|a_{\alpha,\beta}|=1$. For, let $A=\left(\sum\limits_{|\alpha|=m,|\beta|=l}\left|a_{\alpha,\beta}\right|\right)^{\frac{1}{l+m}}$ and rewrite the polynomial as 
	$$P(x,y)=\sum_{|\alpha|=m,|\beta|=l}\frac{a_{\alpha,\beta}}{A^{l+m}}(Ax)^{\alpha}(Ay)^{\beta}+R_{0}\left(\frac{A x}{A}, \frac{A y}{A}\right) = Q(Ax,Ay)$$
	Also, note that 
	\begin{eqnarray*}
		T^{P}_{\Omega,\epsilon}f(x)
		&=&T^{Q}_{\Omega,\frac{\epsilon}{A}}f\left(\frac{\cdot}{A}\right)(Ax).
	\end{eqnarray*}
	We use sparse domination with the dilated function and observe that the sparse domination is independent of dilation to conclude the assertion. 
	
	Let us assume that $\sum\limits_{|\alpha|=m,|\beta|=l}|a_{\alpha,\beta}|=1$ and consider 
	\begin{eqnarray*}
		T^P_{\Omega,*}f(x)&\leq& \sup_{0<\epsilon<2}\left|\int_{|x-y|>\epsilon}e^{\iota P(x,y)}\frac{\Omega((x-y)')}{|x-y|^{n}}f(y)~dy\right|\\
		&&+\sup_{\epsilon\geq2}\left|\int_{|x-y|>\epsilon}e^{\iota P(x,y)}\frac{\Omega((x-y)')}{|x-y|^{n}}f(y)~dy\right|\\
		&\leq& \sup_{0<\epsilon<2}\left|\int_{\epsilon<|x-y|<2}e^{\iota P(x,y)}\frac{\Omega((x-y)')}{|x-y|^{n}}f(y)~dy\right|\\
		&&+\left|\int_{|x-y|\geq2}e^{\iota P(x,y)}\frac{\Omega((x-y)')}{|x-y|^{n}}f(y)~dy\right|\\
		&&+\sup_{\epsilon\geq2}\left|\int_{|x-y|>\epsilon}e^{\iota P(x,y)}\frac{\Omega((x-y)')}{|x-y|^{n}}f(y)~dy\right|\\
		&=& T^P_{0,*}f(x)+T^P_{\infty}f(x)+T^P_{\infty,*}f(x)
	\end{eqnarray*}
	We shall prove the desired result for each of the terms as above separately. Let $h\in\mathbb{R}^{n}$. Note that we can rewrite the polynomial as 
	$$P(x,y)=\sum_{|\alpha|=m,|\beta|=l}a_{\alpha,\beta}(x-h)^{\alpha}(y-h)^{\beta}+R(x,y,h),$$
	where the polynomial $R$ satisfies the induction hypothesis.
	We first deal with the local part. We decompose it further in the following way.  \begin{eqnarray}
		T^{P}_{0,*}(f)(x):=T^{P}_{01,*}(f)(x)+T^{P}_{02,*}(f)(x),
	\end{eqnarray}
	where 
	\begin{eqnarray*}
		&&T^{P}_{01,*}(f)(x)=\sup_{0<\epsilon<2}\left|\int_{\epsilon<|x-y|<2}e^{\iota[R(x,y,h)+\sum_{|\alpha|=m,|\beta|=l}a_{\alpha,\beta}(y-h)^{\alpha+\beta}]}\frac{\Omega((x-y)')}{|x-y|^{n}}f(y)~dy\right|\\
		\text{and}~~	&&T^{P}_{02,*}f(x):=\sup_{0<\epsilon<2}\left|\int_{\epsilon<|x-y|<2}\bigg(e^{\iota P(x,y)}-e^{\iota[R(x,y,h)+\sum_{|\alpha|=m,|\beta|=l}a_{\alpha,\beta}(y-h)^{\alpha+\beta}]}\bigg)\frac{\Omega((x-y)')}{|x-y|^{n}}f(y)~dy\right|.
	\end{eqnarray*}
	Note that Lemma~ \ref{localsparse} ~yields the desired sparse bounds for the first term, i.e. we get 
	$$|\langle T^{P}_{01,*}f,g\rangle |\lesssim \sup_{\mathcal{S}}\Lambda_{\mathcal{S},r,r}(f,g).$$
	For the second term $T^{P}_{02,*}f$ observe that if $|x-h|<1/2$ and $|x-y|<2$, then we have $|y-h|<5/2$. This gives us    
	$$\bigg|e^{\iota P(x,y)}-e^{\iota[R(x,y,h)+\sum\limits_{|\alpha|=m,|\beta|=l}a_{\alpha,\beta}(y-h)^{\alpha+\beta}]}\bigg|\leq C\sum_{|\alpha|=m,|\beta|=l}|a_{\alpha,\beta}||x-y|\leq C|x-y|.$$
	Thus, the term $T^{P}_{02,*}f$ can be dominated by the Hardy-Littlewood maximal function, i.e. we have  $T^{P}_{02,*}f(x)\lesssim Mf_{h}(x)$, where $f_{h}=f\chi_{B(h,\frac{9}{4})}$. Hence
	$$|\langle T^{P}_{02,*}f_h,g\chi_{B(h,\frac{1}{2})}\rangle |\lesssim \Lambda_{\mathcal{S}_{0},1,1}(f_{h},g).$$ 
	Since this estimate holds uniformly in $h$, using the same argument as in the previous section, we get that 
	\begin{eqnarray}
		|\langle T^{P}_{02,*}f,g\rangle|\lesssim \sup_{\mathcal S}\Lambda_{\mathcal{S},1,1}(f,g).
	\end{eqnarray}
	Next, we need to deal with the terms $T^P_{\infty}f$ and $T^P_{\infty,*}f$. 
	We decompose the operator $T^{P}_{\infty}$ as  $$T^{P}_{\infty}=\sum_{j\geq2}T^{P}_j.$$
	Since $\epsilon\geq 1$, we can choose $J\in \N$ such that $2^{J-1}\leq\epsilon <2^J$. Thus, we can write
	\begin{eqnarray*}
		T^P_{\infty,*}f(x)&\leq& \sup_{J\geq1}\int_{2^{J-1}\leq |y| <2^J} \frac{|\Omega(y)|}{|y|^n}|f(x-y)|~dy\\
		&&+\sup_{J \geq1} \sum_{j=\geq J+1}\left|\int_{2^{j-1} \leq|x-y|<2^{j}} e^{\iota P(x,y)} \frac{\Omega(x-y)}{|x-y|^n}f(y) ~dy\right|\\
		&\leq&\Vert \Omega\Vert_{L^{\infty}} Mf(x) + \sum_{j\geq2}|T^{P}_jf(x)|
	\end{eqnarray*}
	Also, note that $T^{P}_{\infty}f(x) \leq\sum\limits_{j\geq2}|T^{P}_jf(x)|$. Therefore, we need to prove sparse bounds for the operator $\sum\limits_{j\geq2}|T^{P}_jf(x)|$. 
	
	We work with the operator $T^{P}_j$ for each fixed $j\geq 2$. Let $Q$ be a dyadic cube with side length $l_{Q} = 2^{j+2}$ and recall the operator 
	\begin{equation*}
		I _{Q} f(x) := \int_{2^{j-1}\leq |y|<2^{j}} e ^{\iota P(x,x-y)} \frac{\Omega(y) }{|y|^{n}} \left(\chi_{\frac{1}{3}Q} f\right)(x-y)\; dy.  
	\end{equation*}
	Note that $ I_Q f $ is supported on $Q$. Since each dyadic grid $\mathcal{D}_{t}$ of cubes in $\mathbb{R}^{n}$ with side length $l_{Q}=2^{j+2}$ forms a partition of $\mathbb{R}^{n}$, the operator $T^{P}_{j}$ can be expressed as $T^{P}_{j}f:=\sum\limits_{Q\in \mathcal{D}_{t}}I_{Q}f$. Hence it is enough to work with the operators $I_{Q}$ instead of $T^{P}_{j}$. 
	
	We claim that for any $1<r<2$, we have 
	\begin{eqnarray}\label{lr}
		|\langle I_Q f, g \rangle| 
		&\lesssim& 2 ^{-\eta  j}|Q|\langle f\rangle_{Q,r}\langle g\rangle_{Q,r},
	\end{eqnarray}
	for some $\eta>0$.
	
	By summing over cubes $Q$ and  $j$  in the estimate above, we get the desired sparse bounds for $\sum\limits_{j\geq2}|T^{P}_jf(x)|$. 
	
	The estimate above is deduced by using interpolation between $L^2$ and $L^{\infty}$ estimates. The $L^2$ estimate follows from~Lemma~\ref{mainlemma} in the following manner. Applying  Minkowski's integral inequality in the equation \eqref{NQ} and using Lemma~\ref{mainlemma}, we get that  
	\begin{eqnarray*}
		\Vert I_{Q}f\Vert_{{2}}&\leq& \int_{\mathbb{S}^{n-1}}|\Omega(y')|\bigg(\int_{Y}\int_{\mathbb{R}}|N_{Q}f[z+\cdot y'](s)|^{2}~ds dz\bigg)^{1/2}d\sigma(y')\\
		&\lesssim& 2^{-\frac{j\delta}{2}}\Vert f\chi_{Q}\Vert_{{2}}\int_{\mathbb{S}^{n-1}}|\Omega(y')| \left|\sum_{\substack{|\alpha|=m \\|\beta|=l}} a_{\alpha,\beta}y'^{\alpha+\beta}\right|^{-\frac{\delta}{2m}}d\sigma(y')\\
		&\lesssim& 2^{-\frac{j\delta}{2}}\Vert \Omega\Vert_{{q}}\Vert f\chi_{Q}\Vert_{{2}}\left(\int_{\mathbb{S}^{n-1}} \left|\sum_{\substack{|\alpha|=m \\|\beta|=l}} a_{\alpha,\beta}y'^{\alpha+\beta}\right|^{-\frac{\delta q'}{2m}}d\sigma(y')\right)^{\frac{1}{q'}}.
	\end{eqnarray*}
	We choose $\delta \in(0,1]$ such that $\delta<\min \left\{\frac{m}{l}, \frac{2m}{(k+l)q'}\right\}$. Then using the estimate from Lemma~\eqref{poly}, we get that 
	\begin{eqnarray}\label{L2}
		\Vert I_{Q}f\Vert_{{2}}\lesssim2^{-\frac{j\delta}{2}}\Vert \Omega\Vert_{{q}}\Vert f\chi_{Q}\Vert_{{2}}.
	\end{eqnarray}
	Next, consider 
	\begin{eqnarray*}
		|I_{Q}f(x)|&=&\bigg|\int_{2^{j-1}\leq |y|<2^{j}}e^{\iota P(x,x-y)}\frac{\Omega(y/|y|)}{|y|^{n}}f\chi_{\frac{1}{3}Q}(x-y)~dy\bigg|\\
		&\lesssim&|Q|^{-1}\int_{2^{j-1}\leq |y|<2^{j}}|\Omega(y/|y|)||f\chi_{\frac{1}{3}Q}(x-y)|~dy\\
		&\lesssim& |Q|^{-1}\Vert \Omega\Vert_{\infty}\Vert f\chi_{Q}\Vert_{1}.
	\end{eqnarray*} 
	Interpolating between the $L^{\infty}$ estimate above and the $L^2$ estimate~\eqref{L2}  we get that for any $1<r<2$,
	\begin{eqnarray}
		\|I_Qf\|_{r'}\lesssim 2^{-\delta j(1-\theta)} |Q|^{-\theta}\|\Omega\|_{\infty} \|f\chi_{Q}\|_{r}
	\end{eqnarray}
	holds, where $1/r+1/r'=1$ and $\theta=1-\frac{2}{r'}$. 
	
	Therefore, for any $1<r<2$, we have 
	\begin{eqnarray*}
		|\langle I_Q f, g \rangle| &\lesssim& \|I _{Q} f\|_{r'} \|g\chi_{Q}\|_{r}\\ 
		&\lesssim& 2 ^{-\eta  j}  | Q|^{(-1+ \frac{2}{r'})} \|f\chi_{Q}\|_r \|g \chi_{Q}\|_{r}  \\
		&\lesssim& 2 ^{-\eta  j}|Q|\langle f\rangle_{Q,r}\langle g\rangle_{Q,r},
	\end{eqnarray*}
	where $\eta>0$.
	
	This completes the proof of Theorem~\ref{mainresult}. \qed
	
	\section{Proof of Theorem \ref{Lq}} 	
	The proof of Theorem~\ref{Lq} follows the same scheme as in the case of Theorem~\ref{mainresult}. We give a brief sketch here. As before, we decompose $$T^{P}_{\Omega}=T^{P}_0+T^{P}_{\infty},$$
	where $T^{P}_{\infty}:=\sum\limits_{j\geq2}T^{P}_j$.
	
	Sparse bounds for the local part $T^{P}_0$ follows using the same arguments as in the case of  Theorem~\ref{mainresult}.
	
	Next, in order to prove the desired estimates for the operator $T_j^P(f)$, we need to decompose $\Omega$ in the following manner. Consider $E_{0}=\{x^{\prime}\in \mathbb{S}^{n-1}:|\Omega(x^{\prime})|<1\}$ and
	$E_{k}=\{x^{\prime}\in \mathbb{S}^{n-1}:2^{k-1}\leq|\Omega(x^{\prime})|<2^{k}\}, ~~~~k \in \mathbb{N}$. 
	This allows us to write $\Omega(x')=\sum\limits_{k\geq 0}\Omega_{k}(x^{\prime}),$ where $\Omega_{k}(x^{\prime})=\Omega\left(x^{\prime}\right) \chi_{E_{k}}(x^{\prime})$. This gives us 
	$$T^{P}_jf(x)=\sum_{k=0}^{\infty} T_{j,k}f(x),$$
	where $T_{j,k}f(x)=\int_{2^{j-1}\leq |x-y|<2^{j}} e^{\iota P(x,y)} \frac{\Omega_{k}(x-y)}{|x-y|^{n}} f(y) dy$
	
	Therefore, we need to consider a further decomposition of the operator $I_Q$ in terms of  
	\begin{equation*}
		I _{Q,k} f(x) := \int_{2^{j-1}\leq |y|<2^{j}} e ^{\iota P  (x,x-y)} \frac{\Omega_{k}(y) }{|y|^{n}} (\chi_{\tfrac 13Q} f ) (x-y)\; dy, \qquad l(Q) = 2^{j+2}.  
	\end{equation*}
	We observe that the analogue of estimate~\eqref{lr} for the operator $I _{Q,k} f$ can be proved by imitating the arguments from that of $I _{Q} f$. Since this part can be completed without difficulty, we skip the details. For $1<r<2$ we get that 
	
	$$\Vert I_{Q,k}f\Vert_{r'}\lesssim 2^{-\frac{j\delta(1-\theta)}{2}} |Q|^{-\theta}\Vert \Omega_k\Vert_{q}^{1-\theta}\Vert \Omega_k\Vert_{\infty}^{\theta}\Vert f\chi_{Q}\Vert_{r},$$
	where $1/r+1/r'=1$ and $\theta=1-\frac{2}{r'}$.
	
	Since $\Vert \Omega_k\Vert_{\infty}\leq 2^k$ and $\Vert \Omega_k\Vert_{q}\leq \Vert \Omega_k\Vert_{\infty}|E_k|^{\frac{1}{q}}$, we have 
	$$\Vert I_{Q,k}f\Vert_{r'}\lesssim 2^{-\frac{j\delta(1-\theta)}{2}} |Q|^{-\theta}2^k|E_k|^{\frac{1-\theta}{q}}\Vert f\chi_{Q}\Vert_{r}.$$
	
	By summing over $k$, we obtain
	$$\Vert I_{Q}f\Vert_{r'}\lesssim 2^{-\frac{j\delta(1-\theta)}{2}} |Q|^{-\theta}\Vert \Omega\Vert_{L^{\frac{q}{1-\theta},1}\log L}\Vert f\chi_{Q}\Vert_{r}.$$
	
	We know that the embedding $L^{\frac{q}{1-\theta},1}\log L(\mathbb{S}^{n-1})\subseteq L^p(\mathbb{S}^{n-1})$ holds whenever $p\leq\frac{q}{1-\theta}$. Since $1-\theta=\frac{2}{r'}$ and $q>1$ we get that $r>(2p)'$. Putting all these estimates together for $1<r<2$, we get that   
	\begin{eqnarray*}
		|\langle I_Q f, g \rangle| &\lesssim& \|I _{Q} f\|_{r'} \|g\chi_{Q}\|_{r}\\ 
		&\lesssim& 2 ^{-\eta  j}  | Q|^{(-1+ \frac{2}{r'})} \|f\chi_{Q}\|_r \|g \chi_{Q}\|_{r}  \\
		&\lesssim& 2 ^{-\eta  j}|Q|\langle f\rangle_{Q,r}\langle g\rangle_{Q,r},
	\end{eqnarray*}
	where $\eta>0$. 
	Thus, by summing over $j$ and cubes $Q$, we get the desired result. This completes the proof of Theorem~\ref{Lq}. \qed
	
	\vspace{.25in}
	\noindent
	{\bf Acknowledgements:} The first author is supported by CSIR (NET), file no. 09/1020(0182)/2019-EMR-I. The second author acknowledges the support from Science and Engineering Research Board, Department of Science and  Technology, Govt. of India under the scheme Core Research Grant with file no. CRG/2021/000230. The third author is supported by the Republic of Korea NRF grant no.  2022R1A4A1018904 and BK21 Postdoctoral fellowship of Seoul National University. 
	
	\vspace{.3in}

\end{document}